\newcommand{\res}{\mathop{\hbox{\vrule height 7pt width .5pt depth 0pt
\vrule height .5pt width 6pt depth 0pt}}\nolimits}
\newtheorem{theorem}{Theorem}
\newtheorem{definition}[theorem]{Definition}
\newtheorem{lemma}[theorem]{Lemma}
\newtheorem{proposition}[theorem]{Proposition}
\newtheorem{remark}[theorem]{Remark}
\newenvironment{proof}[1][Proof]{\noindent\textbf{#1.} }{\ \rule{0.5em}{0.5em}}
\begin{document}

\title{Two characterization of $BV$ functions on Carnot groups via the heat semigroup}
\author{Marco Bramanti\thanks{Dipartimento di Matematica, Politecnico di Milano, Via
Bonardi 9, 20133 Milano, Italy. e-mail: marco.bramanti@polimi.it}
\and Michele Miranda Jr\thanks{Dipartimento di Matematica, Universit\`a di Ferrara,
via Machiavelli, 35, 44100, Ferrara, Italy. e-mail: michele.miranda@unife.it}
\and Diego Pallara\thanks{Dipartimento di Matematica ``Ennio De Giorgi'',
Universit\`a del Salento, C.P.193, 73100, Lecce, Italy. e-mail:
diego.pallara@unisalento.it}}
\maketitle

\begin{abstract}
In this paper we provide two different characterizations of
sets with finite perimeter and functions of bounded variation
in Carnot groups, analogous to those which hold in Euclidean spaces, in terms
of the short-time behaviour of the heat semigroup. The second one holds under
the hypothesis that the reduced boundary of a set of finite perimeter is
rectifiable, a result that presently is known in Step 2 Carnot groups.\\
\textbf{MSC} (2000): 22E30, 28A75, 35K05, 49Q15.\\
\textbf{Keywords}: Functions of bounded variation, perimeters, Carnot groups, Heat semigroup.
\end{abstract}

\maketitle

\section{Introduction}

There are various definitions of \emph{variation} of a function, and the
related classes $BV$ of \emph{functions of bounded variation}, that make sense
in different contexts and are known to be equivalent in wide generality. In
the Euclidean framework, i.e., ${{\mathbb{R}}^{n}}$ endowed with the Euclidean
metric and the Lebesgue measure, the variation of $f\in L^{1}({{\mathbb{R}}^{n}})$ 
can be defined as
\begin{equation} \label{variation1}
\left\vert Df\right\vert ({{\mathbb{R}}^{n}})=\sup\left\{  
\int_{{{\mathbb{R}}^{n}}}f\mathrm{div}\,g\ dx:\ 
g\in C_{0}^{1}({{\mathbb{R}}^{n}},{{\mathbb{R}}^{n}}),\ 
\Vert g\Vert_{L^{\infty}({{\mathbb{R}}^{n}})}\leq1\right\}  ,
\end{equation}
and $|Df|({{\mathbb{R}}^{n}})<+\infty$ is equivalent to saying that
$f\in BV({\mathbb R}^n)$, i.e., the distributional gradient of $f$ is a
(${{\mathbb{R}}^{n}}$-valued) finite Radon measure. This approach can be
generalized to ambients where a measure and a coherent differential structure,
which allows to define a divergence operator, are defined. Alternatively, the
variation of $f$ can be defined through a relaxation procedure,
\begin{equation} \label{variation2}
\left\vert Df\right\vert ({{\mathbb{R}}^{n}})=\inf\left\{  
\liminf_{k\rightarrow\infty}\int_{{{\mathbb{R}}^{n}}}|\nabla f_{k}|dx:\ 
f_{k}\in\mathrm{Lip}({{\mathbb{R}}^{n}}),\ f_{k}\rightarrow f\ \mathrm{in\ }
L^{1}({{\mathbb{R}}^{n}})\right\}  ,
\end{equation}
and the space $BV$ can be defined, accordingly, as the finiteness domain of
the relaxed functional in $L^{1}({{\mathbb{R}}^{n}})$. To generalize
\eqref{variation2}, no differential structure is needed, apart from Lipschitz
functions and a suitable substitute of the modulus of the gradient.
Definitions \eqref{variation1} and \eqref{variation2} have been extended to
several contexts, such as manifolds and metric measure spaces (see e.g.
\cite{miranda}, \cite{metrsbv}, \cite{KiKoShTu10}), and in particular Carnot-Carath\'{e}odory
spaces and Carnot groups, see \cite{CapDanGar94The}, \cite{BirMos95Sob},
\cite{fraserser}. However, we point out that the original definition of
variation of a function, and in particular of set of finite perimeter, has
been given by E. De Giorgi in \cite{DG1} by a regularization procedure based
on the heat kernel. He proved that
\begin{equation}   \label{variation3}
|Df|({{\mathbb{R}}^{n}})=\lim_{t\rightarrow0}
\int_{{{\mathbb{R}}^{n}}}|\nabla T_{t}f|dx,
\end{equation}
(where $\nabla$ denotes the gradient with respect to the space variables
$x\in{{\mathbb{R}}^{n}}$ and $T_{t}f(x)=\int_{{{\mathbb{R}}^{n}}}h(t,x-y)f(y)dy$ 
is the heat semigroup given by the Gauss-Weierstrass kernel
$h$) and that if the above quantity is finite then $f\in BV({\mathbb R}^n)$.
Equality \eqref{variation3} has been recently proved for Riemannian manifolds
$M$ in \cite{CarMau07}, with the only restriction that the Ricci curvature of
$M$ is bounded from below, thus generalizing the result in
\cite{MirPalParPre07Rie}, where further bounds on the geometry of $M$ were assumed.

In this paper the framework is that of Carnot groups, where it is known that
formulae \eqref{variation1} and \eqref{variation2} (where of course the
intrinsic differential structure is used) give equivalent definitions of
$|Df|$, see \cite{fraserser}; we show here that \eqref{variation3} holds in a
weaker sense, all the quantities involved being again the intrinsic ones.
Namely, the two sides of \eqref{variation3} are equivalent, although
we don't know whether they are equal (see Theorem~\ref{Main Theorem 1} 
for the precise statement).

We think that it is interesting to comment on the proof of \eqref{variation3}
in the various situations. Thinking of the left hand side as defined by
\eqref{variation1}, the $\leq$ inequality follows easily by lower
semicontinuity of the total variation and the $L^{1}$ convergence of $T_{t}f$
to $f$, so that the more difficult part is the $\geq$ inequality. In
${{\mathbb{R}}^{n}}$ it follows from the (trivial) commutation relation
$D_{i}T_{t}f=T_{t}D_{i}f$ between the heat semigroup and the partial
derivatives, whereas in the Riemannian case, as treated in \cite{CarMau07}, it
follows from the estimate $\Vert dT_{t}f\Vert_{L^{1}(M)}\leq e^{K^{2}t}|df|(M)$, 
where $d$ denotes the exterior differential on $M$, which replaces
the gradient in \eqref{variation3}, and $-K^{2}$ is a lower bound for the
Ricci curvature of $M$. In the subriemannian case we are dealing with, the
Euclidean commutation does not hold, and we are not able to prove that
equality \eqref{variation3} still holds, but only that the semigroup approach
identifies the $BV$ class. The proof relies on algebraic properties and
Gaussian estimates on the heat kernel and its derivatives which allow to
estimate the commutator $[D,T_{t}]$. Still different arguments are used in the
infinite dimensional case of Wiener spaces.

Recently, inspired by \cite{Led}, another connection between the heat
semigroup and $BV$ functions in ${{\mathbb{R}}^{n}}$ has been pointed out in
\cite{MirPalParPre07Sho}, where the following equality is proved:
\begin{equation}   \label{diffusion}
|Df|({{\mathbb{R}}^{n}})=\lim_{t\rightarrow0}\frac{\sqrt{\pi}}{2\sqrt{t}}
\int_{{{\mathbb{R}}^{n}}}\int_{{{\mathbb{R}}^{n}}}h(t,x-y)|f(x)-f(y)|\,dydx,
\end{equation}
which means that $f\in BV({{\mathbb{R}}^{n}})$ if and only if the right hand
side is finite, and equality holds. Equality \eqref{diffusion} is first proved
for characteristic functions and then extended to the general case using the
coarea formula. The proof for characteristic functions of sets of finite
perimeter, in turn, is based on a blow-up argument on the points of the
reduced boundary and uses the rectifiability of the reduced boundary. In this
paper we extend \eqref{diffusion} to Carnot groups (see Theorem~\ref{Main Theorem 2}) 
but, accordingly to the preceding comments, our proof
depends upon the rectifiability of the reduced boundary, which is presently
known to be true only in groups of Step 2, see \cite{fraserser3}. Therefore,
our proof would immediately generalize to more general groups, if the
rectifiability of the reduced boundary were proved. Notice also that in general 
the heat kernel doesn't have the symmetries that the Gaussian kernel has, such as rotation
invariance. As a consequence, equation \eqref{diffusion} assumes a different 
form in general Carnot groups of step two (see \eqref{diffusionbv} in Theorem 
\ref{Main Theorem 2} below); however, in the special but important case of groups 
of Heisenberg type, \eqref{diffusionbv} simplifies to a form
very similar to \eqref{diffusion} (see Remark \ref{Remark H-type}).

We point out that in the Euclidean case both \eqref{variation3} and
\eqref{diffusion} hold not only in the whole of ${{\mathbb{R}}^{n}}$, but also
in a localized form and with the heat semigroup replaced by the semigroup
generated by a general uniformly elliptic operator, under suitable boundary
conditions, see \cite{Angiuli_Miranda_Pallara_Paronetto:2007}. Finally, let us mention that the
short-time behavior of the heat semigroup in ${\mathbb{R}}^{n}$ has been shown
to be useful also to describe further geometric properties of boundaries, see
\cite{AMM}.

\bigskip

{\bf Aknowledgments} Miranda was partially supported by GNAMPA pro\-ject
\textquotedblleft Metodi geometrici per analisi in spazi non Euclidei; spazi
metrici doubling, gruppi di Carnot e spazi di Wiener\textquotedblright. The
authors would like to thanks L.~Capogna, N.~Garofalo, G.~Leo\-nar\-di, V.~Magnani, R.~Monti,
R.~Serapioni and D.~Vittone for many helpful discussions, as well as the anonymous
referees for their useful suggestions.

\section{Preliminaries and Main Results}

We collect in the first two subsections the notions on Carnot groups and $BV$
functions needed in the paper. After that, we devote the last subsection to
the statement of the main results and some comments.

\subsection{Carnot groups and heat kernels\label{sec carnot}}

\textbf{Basic definitions. }Here we recall some basic known facts about Carnot
groups, whose precise definition we shall give after introducing some
notation. We refer to \cite[\S \ 1.4]{BLUbook} for the proofs and further details.

Let $\mathbb{R}^{n}$ be equipped with a Lie group structure by the composition
law $\circ$ (which we call \emph{translation}) such that $0$ is the identity
and $x^{-1}=-x$ (i.e., the group inverse is the Euclidean opposite); let
$\mathbb{R}^{n}$ be also equipped with a family 
$\left\{  D\left(\lambda\right)  \right\}_{\lambda>0}$ of group automorphisms of $\left(
\mathbb{R}^{n},\circ\right)  $ (called \emph{dilations}) of the following form 
\[
D\left(  \lambda\right)  :\left(  x_{1},\ldots,x_{n}\text{ }\right)
\mapsto\text{ }\left(  \lambda^{\omega_{1}}x_{1},\ldots,\lambda^{\omega_{n}}x_{n}\right)
\]
where $\omega_{1}\leq\omega_{2}\leq\ldots\leq\omega_{n}$ are positive
integers, with $\omega_{1}=\omega_{2}=\ldots=\omega_{q}=1,\omega_{i}>1$ for
$i>q$ for some $q<n.$

If $L_{x}$ is the left translation operator acting on functions,
$(L_{x}f)(y)=f(x\circ y)$, we say that a \emph{differential operator }
$P$\emph{ is left invariant }if \ $P(L_{x}f)=L_{x}(Pf)$ for every smooth
function $f$. Also, we say that \emph{a differential operator }$P$\emph{ is
homogeneous of degree} $\delta>0$ if
\[
P\,(f(D(\lambda)))(x) = \lambda^{\delta}\,(Pf)(D(\lambda)x)
\]
for every test function $f$, $\lambda>0$, $x\in{{\mathbb{R}}^{n}}$, and
\emph{a function }$f$\emph{ is homogeneous of degree} $\delta\in\mathbb{R}$
if
\[
f\,(D(\lambda)x)=\lambda^{\delta}\,f\,(x)\text{ \ for every }\lambda>0,\ 
x\in{{\mathbb{R}}^{n}}.
\]
Now, for $i=1,2,...,q,$ let
\begin{equation}
X_{i}=\sum_{j=1}^{n}q_{i}^{j}\left(  x\right)  \partial_{x_{j}} \label{X}
\end{equation}
be the unique left invariant vector field (with respect to $\circ$) which
agrees with $\partial_{x_{i}}$ at the origin, and assume that the Lie algebra
generated by $X_{1},X_{2},...,X_{q}$ coincides with the Lie algebra of
$\mathbb{G}.$ Then we say that $\mathbb{G}=(\mathbb{R}^{n},\circ,\{D(\lambda)\}_{\lambda>0})$ 
is a \emph{(homogeneous) Carnot group }or a \emph{homogeneous stratified group.}

More explicitly, setting $V_{1}=\mathrm{span}\{X_{1},X_{2},...,X_{q}\}$,
$V_{i+1}=[V_{1},V_{i}]$ (the space generated by the commutators $[X,Y]$, with
$X\in V_{1},\ Y\in V_{i}$), there is $k\in\mathbb{N}$ such that 
${{\mathbb{R}}^{n}}=V_{1}\oplus\cdots\oplus V_{k}$, with $V_{k}\neq\{0\}$ and $V_{i}=\{0\}$
for $i>k$. The integer $k$ is called the \emph{step} of $\mathbb{G}$. 
Since $V_1$ at 0 can be identified with ${\mathbb R}^q$, by the left invariance 
of the vector fields $X_i$ we can identify $V_1$ with ${\mathbb R}^q$ for every
$x\in {\mathbb R}^n$. Note that all vector fields in $V_{j}$ are $j$-homogeneous and call
\[
Q=\sum_{j=1}^{k}j\mathrm{dim}\,V_{j}=\sum_{j=1}^{n}\omega_{j}
\]
the \emph{homogeneous dimension }of ${\mathbb{G}}$.
The operator
\[
L=\sum_{j=1}^{q}X_{j}^{2}
\]
is called \emph{sublaplacian}; it is left invariant, homogeneous of degree
two and, by H\"{o}rmander's Theorem, hypoelliptic.

\textbf{Structure of left- and right-invariant vector fields. }It is sometimes
useful to consider also the \emph{right-invariant vector fields} $X_{j}^{R}$
($j=1,\ldots,n$), which agree with $\partial_{x_{j}}$ (and therefore with
$X_{j}$) at $0$; also these $X_{i}^{R}$ are homogeneous of degree one for
$i\leq q$. (For the following properties of invariant vector fields see \cite[pp.606-621]{St} or \cite{BB}). 
As to the \emph{structure of the left (or right) invariant vector fields}, it can be proved that the systems 
$\left\{ X_{i}\right\}  $ and $\left\{  X_{i}^{R}\right\}  $ have the following
\textquotedblleft triangular form\textquotedblright\ with respect to Cartesian
derivatives:
\begin{align}\label{X triang} 
X_{i}  &  =\partial_{x_{i}}+\sum_{k=i+1}^{n}q_{i}^{k}(x)\, \partial_{x_{k}}
\\  \label{X R triang}
X_{i}^{R}  &  =\partial_{x_{i}}+
\sum_{k=i+1}^{n}\overline{q}_{i}^{k}(x)\,\partial_{x_{k}}%
\end{align}
where $q_{i}^{k},\overline{q}_{i}^{k}$ are polynomials, homogeneous of degree
$\omega_{k}-\omega_{i}$ (the $\omega_{i}$'s are the dilation exponents). When
the triangular form of the $X_{i}$'s is not important, we will keep writing
(\ref{X}), more compactly.

The identities (\ref{X triang})-(\ref{X R triang}) imply that any Cartesian
derivative $\partial_{x_{k}}$ can be written as a linear combination of the
$X_{i}$'s (and, analogously, of the $X_{j}^{R}$'s). In particular, any
homogeneous differential operator can be rewritten as a linear combination of
left invariant (or, similarly, right invariant) homogeneous vector fields,
with polynomial coefficients. The above structure of the $X_{i}$'s also
implies that the $L^2$ transpose $X_{i}^{\ast}$ of $X_{i}$ is just $-X_{i}$
(as in a standard integration by parts). From the above equations we also find
that
\[
X_{i}=\sum_{k=i}^{n}\text{ }c_{i}^{k}(x)\,X_{k}^{R}
\]
where $c_{i}^{k}(x)$ are polynomials, homogeneous of degree $\omega_{k}
-\omega_{i}$. In particular, since $\omega_{k}-\omega_{i}<\omega_{k}$,
$c_{i}^{k}(x)$ does not depend on $x_{h}$ for $h\geq k$ and therefore commutes
with $X_{k}^{R}$, that is
\[
X_{i}f=\sum_{k=i}^{n}X_{k}^{R}\,\left(  c_{i}^{k}
(x)\,f\right)  \text{ \ \ \ \ (}i=1,\ldots,n\text{)}
\]
for every test function $f$. The above identity can be sharpened as follows;
taking its value at the origin we get:
\[
\partial_{x_{i}}=\sum_{k=i}^{q}c_{i}^{k}\partial_{x_{k}}\,
\]
since for $k>q$ the $c_{i}^{k}(x)$ are homogeneous polynomials of positive
degree, and therefore vanish at the origin. Hence $c_{i}^{k}=\delta_{ik}$ for
$i,k=1,2,...,q,$ that is:
\begin{equation}    \label{Xi_XiR}
X_{i}f=X_{i}^{R}f+ \sum_{k=q+1}^{n}X_{k}^{R}\,
\left(  c_{i}^{k}(x)\,f\right)  \text{ \ \ \ \ for }i=1,\ldots,q. 
\end{equation}
Moreover, the operators $X_{k}^{R}$, with $k>q$, can be expressed in terms of
the $X_{1}^{R},\ldots,X_{q}^{R}$, i.e., for every $k=q+1,\ldots,n$ there are
constants $\vartheta_{i_{1}i_{2}...i_{\omega_{k}}}^{k}$ such that
\begin{equation}     \label{theta}
X_{k}^{R}=\sum_{1\leq i_{j}\leq q}
\vartheta_{i_{1}i_{2}...i_{\omega_{k}}}^{k}X_{i_{1}}^{R}X_{i_{2}}^{R}...
X_{i_{\omega_{k}}}^{R}. 
\end{equation}

\textbf{Convolutions.} The \emph{convolution }of two functions in $\mathbb{G}$
is defined as
\[
(f\ast g)(x)=\int_{{{\mathbb{R}}^{n}}}\text{ }f(x\circ y^{-1})\,g(y)\,dy=
\int_{{{\mathbb{R}}^{n}}}\text{ }g(y^{-1}\circ x)\,f(y)\,dy\text{,}
\]
for every couple of functions for which the above integrals make sense. From
this definition we see that if $P$ is any left invariant differential
operator, then
\[
P(f\ast g)=f\ast Pg
\]
(provided the integrals converge). Note that, if $\mathbb{G}$ is not abelian,
we cannot write $f\ast Pg=Pf\ast g$. Instead, if $X$ and $X^{R}$ are,
respectively, a left invariant and right invariant vector field which agree at
the origin, the following hold (see \cite{St}, p.607)
\[
\left( Xf\right) \ast g=f\ast\left(  X^{R}g\right)  \text{; \ \ }
X^{R}\left(  f\ast g\right)  =\left(  X^{R}f\right)  \ast g.
\]
Explicitly this means
\begin{equation}   \label{X (convoluz)}
\int_{{{\mathbb{R}}^{n}}}\text{ }X^{R}g(y^{-1}\circ x)\,f(y)\,dy=
\int_{{{\mathbb{R}}^{n}}}\text{ }g(y^{-1}\circ x)\,Xf(y)\,dy
\end{equation}
for any $f,g$ for which the above integrals make sense.

\textbf{Heat kernels.} If $\mathbb{G}=({{\mathbb{R}}^{n}},\circ,D(\lambda))$ 
is a Carnot group, we can naturally
define its \emph{parabolic version} setting, in $\mathbb{R}^{n+1}$:
\[
\left(  t,u\right)  \circ_{P}\left(  s,v\right)  =\left(  t+s,u\circ v\right)
;\text{ }D_{P}\left(  \lambda\right)  \left(  t,u\right)  =\left(  \lambda
^{2}t,D\left(  \lambda\right)  u\right)  .
\]
If we define the \emph{parabolic homogeneous group} 
$\mathbb{G}_{P}=\left(\mathbb{R}^{n+1},\circ_{P},D_{P}(\lambda)\right)$, 
its homogeneous dimension
is $Q+2$. Let us now consider the heat operator in $\mathbb{G}_{P},$
\[
\mathcal{H}=\partial_{t}-\sum_{j=1}^{q}X_{j}^{2}=\partial_t-L
\]
which is translation invariant, homogeneous of degree $2$ and hypoelliptic. By
a general result due to Folland, see \cite{FoSt}, $\mathcal{H}$ possesses a
homogeneous fundamental solution $h\left(  t,x\right)  ,$ usually called
\emph{the heat kernel on $\mathbb{G}$}. The next theorem collects several
well-known important facts about $h$, which are useful in the sequel; the
statements (i)-(v) below can be found in \cite[Section 1G]{FoSt}, while the estimates
(\ref{BLU 1})-(\ref{BLU 2}) are contained in \cite[Section IV.4]{VSC}.

\begin{theorem}
\label{heatkernel} There exists a function $h(t,x)$ defined in
$\mathbb{R}^{n+1}$ with the following properties:
\begin{itemize}
\item[(i)] $h \in C^{\infty}\left(  {{\mathbb{R}}^{n+1}}\setminus\left\{
0\right\}  \right)  $;
\item[(ii)] $h\left(  \lambda^{2}t,D\left(  \lambda\right)  x\right)  =
\lambda^{-Q}h\left(  t,x\right)  $ for any $t>0,x\in{{\mathbb{R}}^{n}},\ \lambda>0$;
\item[(iii)] $h\left(  t,x\right)  =0$ for any $t<0,x\in{{\mathbb{R}}^{n}}$;
\item[(iv)] $\displaystyle \int_{{{\mathbb{R}}^{n}}}h\left(  t,x\right)  dx=1$ 
for any $t>0$;
\item[(v)] $h\left(  t,x^{-1}\right)  =h\left(  t,x\right)  $ for any
$t>0,x\in{{\mathbb{R}}^{n}}$.
\end{itemize}
Setting $h_t(x)=h(t,x)$, let us introduce the \emph{heat semigroup}, defined as follows 
for any $f\in L^{1}\left(  {{\mathbb{R}}^{n}}\right)$:
\[
W_{t}f(x)=\int_{{{\mathbb{R}}^{n}}}h\left(  t,y^{-1}\circ x\right)f(y)dy
=(f\ast h_t)(x).
\]
Then, for any $f\in L^{1}\left(  {{\mathbb{R}}^{n}}\right)  $ and $t>0$, we
have $W_{t}f\in C^{\infty}\left(  {{\mathbb{R}}^{n}}\right)  $ and the
function $u\left(  t,x\right)  =W_{t}f(x)$ solves the equation 
$\mathcal{H}u=0$ in $(0,\infty)\times{{\mathbb{R}}^{n}}$. Moreover,
\[
u\left(  t,x\right)  \rightarrow f\left(  x\right)  \text{ strongly in }
L^{1}({{\mathbb{R}}^{n}})\text{ as }t\rightarrow0.
\]
Finally, for every nonnegative integers $j,k$, for every $x\in{{{\mathbb{R}}^{n}}},\,t>0$, 
the following Gaussian bounds hold:
\begin{align}  \label{BLU 1}
\mathbf{c}^{-1}t^{-Q/2}e^{-\left\vert x\right\vert ^{2}/\mathbf{c}^{-1}t}  &
\leq h(t,x)\leq\mathbf{c}t^{-Q/2}e^{-\left\vert x\right\vert ^{2}/\mathbf{c}t}
\\   \label{BLU 2}
\left\vert X_{i_{1}}\cdots X_{i_{j}}(\partial_{t})^{k}\,h\left(  t,x\right)\right\vert  
&  \leq\mathbf{c}(j,k)t^{-(Q+j+2k)/2}e^{-\left\vert x\right\vert^{2}/\mathbf{c}t} 
\end{align}
for $i_{1},i_{2},...,i_{j}\in\left\{  1,2,...,q\right\}  .$ Here
$\mathbf{c}\geq1$ is a constant only depending on $\mathbb{G}$, while
$\mathbf{c}(j,k)$ depends on $\mathbb{G},j,k.$
\end{theorem}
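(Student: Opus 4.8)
Since the statement collects together standard facts for which references are given, the plan is to assemble it from two ingredients: the construction of the homogeneous fundamental solution, from which the qualitative properties (i)--(v) and the semigroup assertions follow, and the geometric heat-kernel theory, which supplies the quantitative Gaussian estimates (\ref{BLU 1})--(\ref{BLU 2}).

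First I would invoke Folland's theorem, as in \cite{FoSt}: because $\mathcal{H}=\partial_t-L$ is left invariant on the parabolic group $\mathbb{G}_P$, homogeneous of degree $2$, and hypoelliptic (H\"ormander's condition holds, since $X_1,\dots,X_q$ generate the Lie algebra), it admits a homogeneous fundamental solution $h$. From this I would read off (i)--(v) as follows. Smoothness (i) away from the origin is immediate from hypoellipticity, since $\mathcal{H}h=0$ there. The homogeneity (ii) is a degree count: an operator homogeneous of degree $2$ on a group of homogeneous dimension $Q+2$ has a fundamental solution homogeneous of degree $2-(Q+2)=-Q$, which is exactly $h(\lambda^2 t,D(\lambda)x)=\lambda^{-Q}h(t,x)$ (consistent with $\mathcal{H}h=\delta_0$, $\delta_0$ being homogeneous of degree $-(Q+2)$). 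The support property (iii) is the forward-in-time causality built into the construction. For (iv) I would differentiate $\int_{\mathbb{R}^n}h(t,x)\,dx$ in $t$, use $\int_{\mathbb{R}^n}Lg\,dx=0$ (a consequence of $X_i^\ast=-X_i$, recorded above), and pass to the limit $t\to0^+$ where $h(t,\cdot)\to\delta_0$. The symmetry (v) reflects the self-adjointness $L^\ast=L$, again via $X_i^\ast=-X_i$. The semigroup statements are then routine: $W_tf\in C^\infty$ by hypoellipticity, $\mathcal{H}W_tf=0$ because $P(f\ast g)=f\ast Pg$ for left-invariant $P$ and $\mathcal{H}h_t=0$ for $t>0$, and the $L^1$ convergence follows from (iv) together with the concentration of $h(t,\cdot)$ near the origin, i.e. $\{h_t\}$ is an approximate identity.

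The quantitative heart of the statement is the pair of Gaussian estimates, and for these I would rely directly on \cite{VSC}. The two-sided bound (\ref{BLU 1}) is exactly the content of the theory there, which rests on the volume doubling property of $\mathbb{G}$ and the scaled Poincar\'e inequality, yielding a parabolic Harnack inequality and hence matching upper and lower Gaussian bounds. For the derivative estimates (\ref{BLU 2}) the plan is to combine homogeneity with this base case: since each $X_i$ ($i\leq q$) is homogeneous of degree $1$ and $\partial_t$ of degree $2$, the derivative $X_{i_1}\cdots X_{i_j}(\partial_t)^k h$ is homogeneous of degree $-(Q+j+2k)$; setting $\lambda=t^{-1/2}$ in the homogeneity relation reduces the bound at $(t,x)$ to the bound on the fixed time slice $t=1$, producing the prefactor $t^{-(Q+j+2k)/2}$, while the Gaussian factor is transported by the dilation. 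It then remains to control the derivatives at $t=1$ by a Gaussian in $|x|$, which is again of the type established in \cite{VSC}.

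I expect the main obstacle to be precisely these Gaussian bounds, and within them the lower bound and the uniformity of the derivative estimates: the homogeneity argument cleanly produces the correct powers of $t$ but gives no decay by itself, so the Gaussian factor genuinely requires the doubling--Poincar\'e--Harnack machinery. Some care is also needed in reconciling the Euclidean $|x|^2$ in the exponentials with the homogeneous gauge natural to $\mathbb{G}$, although in the short-time regime relevant to this paper the two are compatible. Since all of these facts are available in the cited literature, the proof reduces to invoking and aligning them correctly.
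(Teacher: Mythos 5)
Your route coincides with the paper's, which does not prove Theorem~\ref{heatkernel} but refers to \cite{FoSt} for (i)--(v) and the semigroup assertions and to \cite{VSC} for the Gaussian estimates; in particular your homogeneity reduction of the derivative bounds to the time slice $t=1$ is exactly the device the paper itself uses in Remark~\ref{Remark Gaussian}. The parts of your outline concerning Folland's construction, the degree count $2-(Q+2)=-Q$, the symmetry (v) via self-adjointness of $L$, and the approximate-identity argument for the $L^1$ convergence are all correct and standard.

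There is, however, a genuine problem in the step you describe as ``reconciling the Euclidean $|x|^2$ in the exponentials with the homogeneous gauge,'' and it is not cured by restricting to short times. The estimates in \cite{VSC} carry the Carnot--Carath\'eodory distance (equivalently, the homogeneous norm $d_\infty(0,x)$) in the exponent, and on a group of step $k\geq 2$ neither comparison between $|x|$ and $d_\infty(0,x)$ holds uniformly: along a direction lying in a layer of homogeneity $\omega>1$ one has $d_\infty(0,x)\asymp|x|^{1/\omega}$, which is much smaller than $|x|$ for $|x|$ large and much larger than $|x|$ for $|x|$ small. Hence the upper bound in \eqref{BLU 1} with the Euclidean norm is strictly stronger than what the cited theory gives for large $|x|$ (on the first Heisenberg group $h(1,(0,0,z))$ decays like $e^{-\pi|z|}$ as $|z|\to\infty$, which is not $O(e^{-|z|^2/\mathbf{c}})$), and the lower bound is strictly stronger than what it gives for small $|x|$ and small $t$; no homogeneity or Harnack argument can close this. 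To be fair, the paper's Remark~\ref{Remark Gaussian} makes the same too-quick claim that the Euclidean form ``follows immediately from the usual estimates,'' and everything the estimates are used for later (integrability of $h(1,\cdot)$ and its derivatives on ${\mathbb R}^n$ and on hyperplanes, \eqref{phi bound}, Lemma~\ref{lemmanuclei}) only needs the correct versions with $d_\infty(0,x)$ in the exponent. The clean repair, for your argument and for the statement alike, is to keep $e^{-d_\infty(0,x)^2/(\mathbf{c}t)}$ throughout rather than $e^{-|x|^2/(\mathbf{c}t)}$.
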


\begin{remark}\label{Remark Gaussian}{\rm 
The Gaussian estimates in this context usually involve the so-called {\em 
homogeneous norm} and are more precise than those we use here, that are
stated in terms of the Euclidean norm. We have stated the Gaussian bounds in the 
present form because we don't need the estimates in their full strength and 
\eqref{BLU 1}, \eqref{BLU 2} follow immediately from the usual estimates In this  
way we avoid the introduction of the homogeneous norm, which
we don't need.  

The bound on the derivatives $X_{i_{1}}\cdots X_{i_{j}}\partial_{t}^{k}\,h\left(  t,x\right)$ 
still holds, in the same form, if the vector fields $X_{i_{1}},\cdots,X_{i_{j}}$ are replaced with any
family of $j$ vector fields, homogeneous of degree one; for instance, we will
apply this bound to derivatives with respect to the right invariant vector
fields $X_{i}^{R}$. Moreover, a homogeneity argument shows that if $a\left(x\right)$ 
is a homogeneous function of degree $j^{\prime},$ then
\[
\left\vert X_{i_{1}}...X_{i_{j}}\partial_{t}^{k}
\left[  a\left(  x\right)h\left(  t,x\right)  \right]  \right\vert 
\leq c\left(  j,k,a\right)t^{-\left(  Q+j-j^{\prime}+2k\right)  /2}
e^{-\left\vert x\right\vert^{2}/\mathbf{c}t}.
\]
}\end{remark}

\subsection{$BV$ functions} \label{BVsubsection}

Let us define the Sobolev spaces $W^{1,p}({{\mathbb{G}}})$, $1\leq p<\infty$,
and the space $BV({\mathbb{G}})$ of functions of bounded variation in
$\mathbb{G}$ and list their main properties. We remark that the definition of
$BV(\mathbb{G})$ goes back to \cite{CapDanGar94The} and refer to
\cite{AmbFusPal00Fun} and to \cite{fraserser3} for more information on the
Euclidean and the subriemannian case, respectively. We start from the Sobolev case.

\begin{definition} \label{defSobolev} 
For $1\leq p<\infty$, we say that $f\in L^{p}({{\mathbb{R}}^{n}})$ belongs to 
the Sobolev space $W^{1,p}({{\mathbb{G}}})$ if there are
$f_{1},\ldots,f_{q}\in L^{p}({{\mathbb{R}}^{n}})$ such that
\[
\int_{{{\mathbb{R}}^{n}}}f(x)X_{i}g(x)dx=-
\int_{{{\mathbb{R}}^{n}}} g(x)f_{i}(x)dx,\qquad i=1,\ldots,q,
\]
for all $g\in C_{0}^{1}({{\mathbb{R}}^{n}})$. In this case, we denote $f_{i}$
as $X_{i}f$ and set
\[
\nabla_{X}f=\left(  X_{1}f,X_{2}f,...,X_{q}f\right)  .
\]
We also let
\[
\left\Vert \nabla_{X}f\right\Vert _{L^{1}\left(  \mathbb{R}^{n}\right)  }
=\int_{\mathbb{R}^{n}}\left\vert \nabla_{X}f\right\vert dx=\int_{\mathbb{R}^{n}}
\sqrt{{\displaystyle\sum\nolimits_{i=1}^{q}}\left\vert X_{i}f\right\vert ^{2}}dx.
\]
\end{definition}

We now consider functions of bounded variation.

\begin{definition}\label{defBV} 
For $f\in L^{1}({{\mathbb{R}}^{n}})$ we say that $f\in BV({\mathbb{G}})$  
if there are finite Radon measures $\mu_i$ such that
\[
\int_{{{\mathbb{R}}^{n}}}f(x)X_{i}g(x)dx=-
\int_{{{\mathbb{R}}^{n}}} g(x)d\mu_{i},\qquad i=1,\ldots,q,
\]
for all $g\in C_{0}^{1}({{\mathbb{R}}^{n}})$. In this case, we denote $\mu_{i}$
by $X_{i}f$ and notice that the total variation of the ${\mathbb R}^q$-valued
measure $D_{\mathbb G}f=(X_1f,\ldots,X_qf)$ is given by 
\begin{equation}  \label{deftotvarG}
\left\vert D_{\mathbb{G}}f\right\vert \left(  {{\mathbb{R}}^{n}}\right)
=\sup\left\{  \int_{{{\mathbb{R}}^{n}}}f(x)\mathrm{div}_{{\mathbb{G}}}g(x)dx:
\ g\in C_{0}^{1}\left(  {{\mathbb{R}}^{n}},{\mathbb{R}}^{q}\right),
\left\Vert g\right\Vert _{\infty}\leq1\right\}
\end{equation}
where
\[
\mathrm{div}_{\mathbb{G}}g(x)=\sum_{i=1}^{q}X_{i}g_{i}(x)
\text{, if }g(x)=\left(  g_{1}(x),\ldots,g_{q}(x)\right)
\]
and $\Vert g\Vert_{\infty}=\sup_{x\in{{\mathbb{R}}^{n}}}|g(x)|.$ 
\end{definition}

With the same proof contained e.g. in \cite[Prop. 3.6]{AmbFusPal00Fun}, it is
possible to show that if $f$ belongs to $BV(\mathbb{G})$ then its total
variation $|D_{\mathbb{G}}f|$ is a finite positive Radon measure and there is
a $|D_{\mathbb{G}}f|$-measurable function $\sigma_{f}:{{\mathbb{R}}^{n}}\rightarrow{\mathbb{R}}^{q}$ 
such that $|\sigma_{f}(x)|=1$ for $|D_{\mathbb{G}}f|$-a.e. $x\in{{\mathbb{R}}^{n}}$ and
\begin{equation}    \label{defsigmaf}
\int_{{{\mathbb{R}}^{n}}}f(x)\mathrm{div}_{\mathbb{G}}g(x)dx=
\int_{{{\mathbb{R}}^{n}}}\langle g,\sigma_{f}\rangle d|D_{\mathbb{G}}f|
\end{equation}
for all $g\in C_{0}^{1}(\mathbb{G},{\mathbb{R}}^{q})$ where, here and in the
following, we denote by $\left\langle \cdot,\cdot\right\rangle $ the usual
inner product in $\mathbb{R}^{q}.$ \\
We denote by $D_{\mathbb{G}}f$ the vector measure $-\sigma_{f}|D_{\mathbb{G}}f|$, 
so that $X_{i}f$ is the measure $(-\sigma_{f})_{i}|D_{\mathbb{G}}f|$ and
the following integration by parts holds true
\begin{equation}   \label{byparts_BV}
\int_{{{\mathbb{R}}^{n}}}f(x)X_{i}g(x)dx = - 
\int_{{{\mathbb{R}}^{n}}}g\left(x\right)d\left( X_{i}f\right) \left( x\right)
\end{equation}
for all $g\in C_{0}^{1}(\mathbb{G})$. Note also that
\begin{equation}  \label{dX_i D_G}
\left\vert X_{i}f\right\vert \left(  \mathbb{R}^{n}\right)  \leq
|D_{\mathbb{G}}f|\left(  \mathbb{R}^{n}\right)  . 
\end{equation}
To visualize the above definitions we note that, whenever $f$ is a smooth
function, by the Euclidean divergence theorem we have:
\[
\sigma_{f} =-\frac{\nabla_{X}f}{\left\vert \nabla_{X}f\right\vert };
\qquad 
d\left\vert D_{\mathbb{G}}f\right\vert \left(  x\right) =
\left\vert \nabla_{X}f\left(  x\right)  \right\vert dx;
\]
(whenever $\left\vert \nabla_{X}f\right\vert \neq0$) and 
\[
d\left(  X_{i}f\right)  \left(  x\right)  =X_{i}f\left(  x\right)  dx
\]
It is clear that $W^{1,1}({{\mathbb{G}}})$ functions are $BV(\mathbb{G})$
functions whose measure gradient is absolutely continuous with respect to
Lebesgue measure. Moreover, since it is the supremum of $L^{1}$-continuous
functionals, the total variation is $L^{1}({{\mathbb{R}}^{n}})$ lower
semicontinuous (see \cite[Theorem 2.17]{fraserser3}), i.e., $f_{k}\rightarrow f$ 
in $L^{1}({{\mathbb{R}}^{n}})$ implies that
\begin{equation}    \label{sup L1}
|D_{\mathbb{G}}f|({{\mathbb{R}}^{n}})\leq\liminf_{k\rightarrow\infty }
|D_{\mathbb{G}}f_{k}|({{\mathbb{R}}^{n}}). 
\end{equation}
Namely:
\[
\int_{{{\mathbb{R}}^{n}}}f_{k}(x)\mathrm{div}_{\mathbb{G}}g(x)dx\leq
|D_{\mathbb{G}}f_{k}|({{\mathbb{R}}^{n}}), \quad\forall 
g\in C_{0}^{\infty}\left(  {{\mathbb{R}}^{n}},{\mathbb{R}}^{q}\right)  ,
\left\Vert g\right\Vert_{\infty}\leq1.
\]
Passing to the liminf as $k\rightarrow\infty$ we get 
\[
\int_{{{\mathbb{R}}^{n}}}f(x)\mathrm{div}_{\mathbb{G}}g(x)dx\leq
\liminf_{k\rightarrow\infty}|D_{\mathbb{G}}f_{k}|({{\mathbb{R}}^{n}})
\]
and taking the supremum over all possible $g$'s we get (\ref{sup L1}).

\begin{definition}[Sets of finite $\mathbb{G}$-perimeter]\label{perimeter}
If $\chi_{E}$ is the characteristic function of a measurable set $E\subset\mathbb{R}^{n}$ 
and $|D_{\mathbb{G}}\chi_{E}|$ is finite, we say that $E$ is a set of finite
$\mathbb{G}$-perimeter and we write $P_{\mathbb{G}}(E)$ instead of
$|D_{\mathbb{G}}\chi_{E}|$, $P_{\mathbb{G}}(E,F)$ instead of 
$|D_{\mathbb{G}}\chi_{E}|(F)$ for $F$ Borel. Also, we call (generalized inward) 
$\mathbb{G}$-normal the $q$-vector 
\[
\nu_{E}\left(  x\right)  =-\sigma_{\chi_{E}}\left(  x\right)  .
\]
\end{definition}
Recall that $\left\vert \nu_{E}\left(  x\right)  \right\vert =1$ for
$P_{\mathbb{G}}(E)$-a.e. $x\in\mathbb{R}^{n}.$ In this case (\ref{defsigmaf})
takes the form 
\begin{equation}   \label{div thm}
\int_{E}\mathrm{div}_{\mathbb{G}}g(x)dx=-
\int_{{{\mathbb{R}}^{n}}}\langle g,\nu_{E}\rangle dP_{\mathbb{G}}(E). 
\end{equation}
Moreover, notice that if $E$ has finite perimeter, then by the isoperimetric inequality
in Carnot groups, see e.g. \cite[Theorem 1.18 and Remark 1.19]{CapDanGar94The}, either 
$E$ or its complement $E^c$ has finite measure. 

\begin{remark}{\rm 
To help the reader to visualize the above definitions, let us
specialize them to the case of a bounded smooth domain $E$, see \cite[Prop. 2..22]{fraserser3}. 
Let $n_{E}$ be the Euclidean unit inner normal at $\partial E,$ and
consider the $q$-vector $v$ whose $i$-th component is defined by
\[
v_i = \sum_{j=1}^{n}q_{i}^{j}\left(x\right)  \left(  n_{E}\right)  _{j}\left(  x\right)
\]
(where the $q_{i}^{j}$ are the coefficients of the vector fields $X_{i},$
defined in (\ref{X})). Then
\[
\int_{E}\mathrm{div}_{\mathbb{G}}g(x)dx=-\int_{\partial E}
\langle g,  v \rangle dH^{n-1}\left(  x\right)
\]
from which we read that in this case
\begin{equation}\label{smooth}
\nu_{E} =\frac{v}{\left\vert v \right\vert }, \qquad
dP_{\mathbb{G}}(E) =\left\vert v \right\vert
d\left(H^{n-1}\res{\partial E}\right)\left(  x\right) 
\end{equation}
at least at those points of the boundary where $v\neq0$ (noncharacteristic points). 
Here $H^{n-1}$ is the Euclidean $(n-1)$-dimensional Hausdorff measure and $\res$ denotes the 
restriction of the measure. 
}\end{remark}

Identity (\ref{X (convoluz)}) can be extended to the case $g\in C^{1}\left({\mathbb{G}}\right)$, 
$f\in BV\left(  {\mathbb{G}}\right)  $ as follows:
\begin{equation}    \label{dX(convoluz)}
\int_{{{\mathbb{R}}^{n}}}\text{ }X_{i}^{R}g(y^{-1}\circ x)\,f(y)\,dy=
\int_{{{\mathbb{R}}^{n}}}\text{ }g(y^{-1}\circ x)\,d\left(  X_{i}f\right)  (y)
\end{equation}
where the last integral is made with respect to the measure defined by
$X_{i}f$. To see this, it is enough to take a sequence $f_{k}\in W^{1,1}({\mathbb{G}})$ 
such that $f_{k}\rightarrow f$ in $L^{1}({{\mathbb{R}}^{n}})$ and $X_{i}f_{k}$ 
is weakly$^{\ast}$ convergent to $X_{i}f$ (the existence of such a sequence is proved 
in \cite{fraserser}), so that
\begin{align*}
\lim_{k\rightarrow\infty}
\int_{{{\mathbb{R}}^{n}}}\text{ }X_{i}^{R} g(y^{-1}\circ x)\,f_{k}(y)\,dy  
&  =\lim_{k\rightarrow\infty}
\int_{{{\mathbb{R}}^{n}}}\text{ }g(y^{-1}\circ x)\,X_{i}f_{k}(y)\,dy
\\
&  =\int_{{{\mathbb{R}}^{n}}}\text{ }g(y^{-1}\circ x)\,d\left(  X_{i}f\right)(y).
\end{align*}
Let us come to some finer properties of $BV$ functions that we need only in
the proof of Theorem~\ref{Main Theorem 2}. We refer to \cite[Theorem 2.3.5]{fraserser} 
for a proof of the following statement.

\begin{proposition} [Coarea formula]\label{coarea} 
If $f\in BV({\mathbb{G}})$ then for a.e.
$\tau\in\mathbb{R}$ the set $E_{\tau}=\{x\in{{\mathbb{R}}^{n}}:\ f(x)>\tau\}$
has finite $\mathbb{G}$-perimeter and
\begin{equation}    \label{coareaformula}
\left\vert D_{\mathbb{G}}f\right\vert ({{\mathbb{R}}^{n}})=
\int_{-\infty}^{+\infty}|D_{\mathbb{G}}\chi_{E_{\tau}}|({{\mathbb{R}}^{n}})d\tau.
\end{equation}
Conversely, if $f\in L^{1}({{\mathbb{R}}^{n}})$ and 
$\int_{-\infty}^{+\infty}|D_{\mathbb{G}}\chi_{E_{\tau}}|({{\mathbb{R}}^{n}})d\tau<+\infty$ 
then $f\in BV(\mathbb{G})$ and equality \eqref{coareaformula} holds. Moreover, if
$g:\mathbb{R}^{n}\rightarrow\mathbb{R}$ is a Borel function, then
\begin{equation}    \label{coareag}
\int_{\mathbb{R}^{n}}g(x)d\left\vert D_{\mathbb{G}}f\right\vert (x)=
\int_{-\infty}^{+\infty}\int_{\mathbb{R}^{n}}g(x)d|D_{\mathbb{G}}\chi_{E_{\tau}}|(x)d\tau. 
\end{equation}
\end{proposition}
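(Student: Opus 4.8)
The plan is to reduce \eqref{coareaformula} to two complementary inequalities, establishing the delicate one by first treating smooth functions and then transferring it to the general case by approximation; the weighted identity \eqref{coareag} is then obtained by a measure-theoretic extension.

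First I would prove the inequality $|D_{\mathbb{G}}f|(\mathbb{R}^n)\le\int_{-\infty}^{+\infty}P_{\mathbb{G}}(E_\tau)\,d\tau$, which also yields the converse implication. Starting from the layer-cake decomposition
\[
f(x)=\int_0^{\infty}\chi_{E_\tau}(x)\,d\tau-\int_{-\infty}^0\big(1-\chi_{E_\tau}(x)\big)\,d\tau,
\]
I would insert it into $\int_{\mathbb{R}^n} f\,\mathrm{div}_{\mathbb{G}}g\,dx$ for $g\in C_0^1(\mathbb{R}^n,\mathbb{R}^q)$ with $\|g\|_\infty\le1$, and use Fubini together with $\int_{\mathbb{R}^n}\mathrm{div}_{\mathbb{G}}g\,dx=0$ (which holds by integration by parts, $X_i^\ast=-X_i$, and kills the additive constant) to obtain $\int_{\mathbb{R}^n} f\,\mathrm{div}_{\mathbb{G}}g\,dx=\int_{-\infty}^{+\infty}\!\int_{\mathbb{R}^n}\chi_{E_\tau}\,\mathrm{div}_{\mathbb{G}}g\,dx\,d\tau$. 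Bounding the inner integral by $P_{\mathbb{G}}(E_\tau)$ through \eqref{deftotvarG} and taking the supremum over $g$ gives the claimed inequality, so that finiteness of the right-hand side forces $f\in BV(\mathbb{G})$. The measurability of $\tau\mapsto P_{\mathbb{G}}(E_\tau)$, needed to make sense of the integral, follows by taking the supremum in \eqref{deftotvarG} over a countable dense family of test fields.

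The main obstacle is the reverse inequality, whose heart is the smooth case. For $f\in C_0^\infty(\mathbb{R}^n)$ I would argue as follows. By Sard's theorem, for a.e. $\tau$ the value $\tau$ is regular, so $\{f=\tau\}$ is a smooth hypersurface equal to $\partial E_\tau$, the set $E_\tau$ is bounded (or has bounded complement, with the same perimeter), the inner normal is $n_{E_\tau}=\nabla f/|\nabla f|$, and \eqref{smooth} applies. There the components of $v$ are $v_i=\sum_j q_i^j(n_{E_\tau})_j=X_if/|\nabla f|$, so $|v|=|\nabla_X f|/|\nabla f|$ and hence $P_{\mathbb{G}}(E_\tau)=\int_{\{f=\tau\}}(|\nabla_X f|/|\nabla f|)\,dH^{n-1}$ (characteristic points contribute nothing, as $|v|=0$ there). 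Applying the classical Euclidean coarea formula (see \cite{AmbFusPal00Fun}) to $\phi=|\nabla_X f|/|\nabla f|$ on $\{\nabla f\ne0\}$ then gives
\[
\int_{-\infty}^{+\infty}P_{\mathbb{G}}(E_\tau)\,d\tau=\int_{\{\nabla f\ne0\}}\frac{|\nabla_X f|}{|\nabla f|}\,|\nabla f|\,dx=\int_{\mathbb{R}^n}|\nabla_X f|\,dx=|D_{\mathbb{G}}f|(\mathbb{R}^n),
\]
the last step using that $\nabla_X f=0$ wherever $\nabla f=0$, so $\{\nabla f=0\}$ may be discarded. To pass to general $f\in BV(\mathbb{G})$, I would choose, by smooth approximation in $BV(\mathbb{G})$ (see \cite{fraserser}, \cite{CapDanGar94The}), functions $f_k\in C_0^\infty(\mathbb{R}^n)$ with $f_k\to f$ in $L^1$ and $|D_{\mathbb{G}}f_k|(\mathbb{R}^n)\to|D_{\mathbb{G}}f|(\mathbb{R}^n)$. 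Setting $E_\tau^k=\{f_k>\tau\}$, from $\int_{-\infty}^{+\infty}\|\chi_{E_\tau^k}-\chi_{E_\tau}\|_{L^1}\,d\tau=\|f_k-f\|_{L^1}\to0$ a subsequence satisfies $\chi_{E_\tau^k}\to\chi_{E_\tau}$ in $L^1$ for a.e. $\tau$, so lower semicontinuity \eqref{sup L1} gives $P_{\mathbb{G}}(E_\tau)\le\liminf_k P_{\mathbb{G}}(E_\tau^k)$; Fatou's lemma and the smooth-case identity then yield $\int_{-\infty}^{+\infty}P_{\mathbb{G}}(E_\tau)\,d\tau\le\liminf_k|D_{\mathbb{G}}f_k|(\mathbb{R}^n)=|D_{\mathbb{G}}f|(\mathbb{R}^n)$, which is the reverse inequality. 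Combined with the first step this proves \eqref{coareaformula}, and finiteness of the integral shows $E_\tau$ has finite perimeter for a.e. $\tau$. It is precisely here that strict convergence of the approximants, rather than mere weak-$*$ convergence of the gradients, is essential, since lower semicontinuity alone controls $|D_{\mathbb{G}}f|$ only from above.

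Finally, for \eqref{coareag} I would localize the first step: repeating it with $g\in C_0^1(A,\mathbb{R}^q)$ supported in an open set $A$ gives $|D_{\mathbb{G}}f|(A)\le\int_{-\infty}^{+\infty}P_{\mathbb{G}}(E_\tau,A)\,d\tau$. Writing $\nu(B)=\int_{-\infty}^{+\infty}P_{\mathbb{G}}(E_\tau,B)\,d\tau$, which is a finite Borel measure (countably additive by monotone convergence, with $\tau\mapsto P_{\mathbb{G}}(E_\tau,B)$ measurable as above), the open-set inequality together with outer regularity gives $|D_{\mathbb{G}}f|\le\nu$ as measures; since they have equal total mass by \eqref{coareaformula}, they coincide. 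Hence $|D_{\mathbb{G}}f|(B)=\int_{-\infty}^{+\infty}P_{\mathbb{G}}(E_\tau,B)\,d\tau$ for every Borel $B$, and \eqref{coareag} follows for $g=\chi_B$, then for simple functions, then for every nonnegative Borel $g$ by monotone convergence, and for general $g$ by splitting into positive and negative parts.
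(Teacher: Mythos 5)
The paper does not actually prove this proposition: it is quoted from \cite[Theorem 2.3.5]{fraserser}, so there is no internal argument to compare yours against line by line. Your proposal is the standard Fleming--Rishel scheme, which is essentially how the cited reference (and the Euclidean treatment in \cite{AmbFusPal00Fun}) proceeds, and it is correct: the layer-cake/Fubini computation gives $|D_{\mathbb{G}}f|(\mathbb{R}^n)\leq\int_{\mathbb{R}}P_{\mathbb{G}}(E_\tau)\,d\tau$ (and hence the converse implication), the smooth case follows from Sard's theorem, the identification $v_i=X_if/|\nabla f|$ in \eqref{smooth} and the Euclidean coarea formula, the general case follows by strict (Meyers--Serrin type) approximation combined with Fatou and the lower semicontinuity \eqref{sup L1}, and the localized identity \eqref{coareag} is obtained by the measure-theoretic comparison of two finite measures with equal total mass. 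Two technical points deserve a word. First, the approximants furnished by \cite{fraserser} lie in $C^\infty\cap W^{1,1}(\mathbb{G})$ but need not be compactly supported, so you should either add a cutoff step (the commutator term $f\,X_i\eta_R$ is $O(R^{-1})\|f\|_{L^1}$, so strict convergence of the variations survives) or verify the smooth-case identity for noncompactly supported smooth $W^{1,1}$ functions; relatedly, for $\tau<0$ the level set $E_\tau$ has bounded complement rather than being bounded, which you correctly handle via $P_{\mathbb{G}}(E)=P_{\mathbb{G}}(E^c)$. Second, in the last step the passage of the inequality $|D_{\mathbb{G}}f|\leq\nu$ from open sets to Borel sets uses outer regularity of both finite measures; this is fine but worth stating explicitly. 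Neither point affects the validity of the argument.
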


Next we have to introduce the reduced boundary. This definition relies on a
particular notion of balls in $\mathbb{G}$. We will denote by $B_{\infty}(x,r)$ 
the $d_{\infty}$-balls of center $x$ and radius $r$, where 
$d_{\infty}\left(  x,y\right)  =d_{\infty}\left(  0,y^{-1}\circ x\right)  $ and
$d_{\infty}\left(  0,x\right)  $ is the homogeneous norm in $\mathbb{G}$ which is 
considered in \cite[Theorem 5.1]{fraserser3}. We do not recall its explicit
analytical definition since we will never need it. The distance
$d_{\infty}$ can be used to define the spherical Hausdorff measures
${\mathcal S}_{\infty}^{k}$ through the usual Carath\'eodory construction,
see e.g. \cite[Section 2.10.2]{Federer}.

\begin{definition}  \label{defboundary}
Let $E\subset{{\mathbb{R}}^{n}}$ be measurable with finite
$\mathbb{G}$-perimeter. We say that $x\in\partial_{\mathbb{G}}^{\ast}E$
(reduced boundary of $E$) if the following conditions hold:
\begin{itemize}
\item[(i)] $P_{\mathbb{G}}\left(  E,B_{\infty}(x,r)\right)  >0$ for all $r>0$;
\item[(ii)] the following limit exists
\[
\lim_{r\rightarrow0}\frac{D_{\mathbb{G}}\chi_{E}(B_{\infty}(x,r))}
{|D_{\mathbb{G}}\chi_{E}|(B_{\infty}(x,r))}
\]
and equals $\nu_{E}(x).$
\item[(iii)] the equality $\left\vert \nu_{E}(x)\right\vert =1$ holds.
\end{itemize}
\end{definition}

In order to state the next result, let us introduce some notation. For
$E\subset{{\mathbb{R}}^{n}}$, $x_{0}\in{{\mathbb{R}}^{n}}$, $r>0$ we consider
the translated and dilated set $E_{r,x_{0}}$ defined as
\[
E_{r,x_{0}}=\{x\in{{\mathbb{R}}^{n}}:\ x_{0}\circ D(r)x\in E\}=D(r^{-1})(x_{0}^{-1}\circ E).
\]
For any vector $\nu\in\mathbb{R}^{q}$ we define the sets
\[
S_{\mathbb{G}}^{+}(\nu)=\left\{  x\in\mathbb{R}^{n}:\ \langle\pi x,\nu
\rangle\geq0\right\}  ,\qquad T_{\mathbb{G}}(\nu)=\left\{  x\in\mathbb{R}^{n}:
\ \langle\pi x,\nu\rangle=0\right\}
\]
where $\pi:\mathbb{R}^{n}\rightarrow\mathbb{R}^{q}$ is the projection which
reads the first $q$ components. Hence the set $T_{\mathbb{G}}(\nu)$ is an
Euclidean ($(n-1)$-dimensional) \textquotedblleft vertical plane\textquotedblright
\ in $\mathbb{R}^{n};$ it is also a subgroup of $\mathbb{G}$, since on the
first $q$ components the Lie group operation is just the Euclidean sum.

We are now in a position to state the following result (see 
\cite[Theorem 3.1]{fraserser3}).

\begin{theorem}   [Blow-up on the reduced boundary]\label{rectifiability}
Let $\mathbb{G}$ be of Step $2$. If $E\subset{{\mathbb{R}}^{n}}$ is a set of finite 
$\mathbb{G}$-perimeter and $x_{0}\in\mathcal{\partial}_{\mathbb{G}}^{\ast}E$ then
\begin{equation}     \label{blow-up1}
\lim_{r\rightarrow0}\chi_{E_{r,x_{0}}}=
\chi_{S_{\mathbb{G}}^{+}(\nu_{E}(x_{0}))}\qquad
\mathrm{in\ \ }L_{\mathrm{loc}}^{1}({{\mathbb{R}}^{n}}).
\end{equation}
(Note that $\nu_{E}(x_{0})$ is pointwise well defined for 
$x_{0}\in\mathcal{\partial}_{\mathbb{G}}^{\ast}E,$ by point (ii) in Definition~\ref{defboundary}). 
Moreover, $P_{\mathbb{G}}\left(  E\right)  $-a.e.
$x\in{{\mathbb{R}}^{n}}$ belongs to $\mathcal{\partial}_{\mathbb{G}}^{\ast}E.$
\end{theorem}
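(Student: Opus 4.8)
The plan is to run a blow-up analysis at the reduced boundary point $x_0$, following the scheme of De~Giorgi's Euclidean argument but with the Euclidean dilations and translations replaced by the intrinsic ones $D(r)$ and $\circ$ of $\mathbb{G}$. The rescaled sets $E_{r,x_0}=D(r^{-1})(x_0^{-1}\circ E)$ are precisely the images of $E$ under the map that sends $x_0$ to the origin and blows up at scale $r$. Since the horizontal vector fields are homogeneous of degree one, the perimeter measure $|D_{\mathbb{G}}\chi_E|$ is homogeneous of degree $Q-1$, and a change of variables gives the scaling relation
\[
P_{\mathbb{G}}\bigl(E_{r,x_0},B_\infty(0,R)\bigr)=\frac{1}{r^{\,Q-1}}\,P_{\mathbb{G}}\bigl(E,B_\infty(x_0,rR)\bigr),
\]
with the analogous identity for the vector measure $D_{\mathbb{G}}\chi_{E_{r,x_0}}$. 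The goal is to prove that as $r\to0$ these sets converge in $L^1_{\mathrm{loc}}$ to the vertical half-space $S^+_{\mathbb{G}}(\nu_E(x_0))$.

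First I would obtain uniform perimeter density bounds. Combining the relative isoperimetric inequality on $d_\infty$-balls (available in $\mathbb{G}$, cf.\ \cite{CapDanGar94The}) with conditions (i)--(iii) of Definition~\ref{defboundary}, one derives two-sided estimates $c\,r^{Q-1}\le P_{\mathbb{G}}(E,B_\infty(x_0,r))\le C\,r^{Q-1}$ for small $r$; through the scaling relation these yield uniform bounds $P_{\mathbb{G}}(E_{r,x_0},B_\infty(0,R))\le C(R)$ for every fixed $R$. The compactness theorem for sets of finite $\mathbb{G}$-perimeter (the intrinsic analogue of the $BV$-compactness theorem, see \cite{fraserser}) then lets me extract a sequence $r_j\to0$ with $\chi_{E_{r_j,x_0}}\to\chi_F$ in $L^1_{\mathrm{loc}}(\mathbb{R}^n)$ for some set $F$ of locally finite perimeter, and with $D_{\mathbb{G}}\chi_{E_{r_j,x_0}}$ converging weakly-$*$ to $D_{\mathbb{G}}\chi_F$.

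Next I would determine the structure of any such limit $F$. The normalization built into point (ii) of Definition~\ref{defboundary} forces the blown-up vector measure to line up with the fixed direction $\nu:=\nu_E(x_0)$, so that in the limit $D_{\mathbb{G}}\chi_F=\nu\,|D_{\mathbb{G}}\chi_F|$; that is, $F$ has \emph{constant $\mathbb{G}$-normal} $\nu$. Equivalently, the horizontal derivative of $\chi_F$ vanishes along every direction orthogonal to $\nu$, while the component along $\nu$ is the nonnegative measure $|D_{\mathbb{G}}\chi_F|$. The hard part, and the exact place where the Step~$2$ hypothesis is indispensable, is the \textbf{classification of sets with constant normal}: one must show that such an $F$ coincides, up to a null set, with the half-space $S^+_{\mathbb{G}}(\nu)$. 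The idea is that the vanishing horizontal derivatives make $\chi_F$ invariant under the flows of the first-layer directions orthogonal to $\nu$, and in a Step~$2$ group the commutators of these flows recover invariance along the entire second layer; together with the one-signed derivative in the $\nu$ direction, this rigidity pins $F$ down to the vertical half-space cut out solely by the first-layer condition $\langle\pi x,\nu\rangle\ge0$. This is precisely the structural fact that is presently available in Step~$2$ but open in higher step, which is why the statement is restricted to $k=2$.

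Finally, since the limit $S^+_{\mathbb{G}}(\nu)$ does not depend on the chosen subsequence, a standard argument upgrades subsequential convergence to the full limit $r\to0$, yielding \eqref{blow-up1}. For the last assertion, that $P_{\mathbb{G}}(E)$-a.e.\ point belongs to $\partial_{\mathbb{G}}^{\ast}E$, I would differentiate the vector measure $D_{\mathbb{G}}\chi_E$ against its total variation: the density estimates above give the asymptotic doubling property of $|D_{\mathbb{G}}\chi_E|$ on $d_\infty$-balls, hence a Besicovitch-type differentiation theorem, so that the limit in point (ii) exists and has modulus one for $|D_{\mathbb{G}}\chi_E|$-a.e.\ $x$; conditions (i) and (iii) then hold automatically almost everywhere, completing the proof.
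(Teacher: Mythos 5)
First, a point of comparison: the paper does not prove this theorem at all --- it is quoted verbatim from \cite[Theorem 3.1]{fraserser3}, so there is no internal proof to measure your argument against. Your outline does reproduce the architecture of the proof in that reference: intrinsic scaling of the perimeter measure under $D(r)$ and $\circ$, density estimates, compactness of sets with locally bounded $\mathbb{G}$-perimeter, identification of any blow-up limit as a set with constant horizontal normal, classification of such sets as vertical half-spaces, and a differentiation argument for the ``$P_{\mathbb{G}}(E)$-a.e.'' statement. So the skeleton is the right one.

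As a proof, however, the proposal has a genuine gap exactly at the step you yourself flag as the hard part: the classification of a set $F$ with constant horizontal normal $\nu$ as the vertical half-space $S^{+}_{\mathbb{G}}(\nu)$. The heuristic you offer --- that the flows of the first-layer directions orthogonal to $\nu$ commute to produce invariance along the whole second layer --- fails already in the first Heisenberg group: there $q=2$, so for $\nu=e_{1}$ the only horizontal direction orthogonal to $\nu$ is the single field $X_{2}$, whose bracket with itself vanishes, and no second-layer invariance is generated at all. The vertical invariance, and hence the half-space structure, has to be extracted from the interplay between $X\chi_{F}=0$ for horizontal $X\perp\nu$ and the one-signed derivative $Y\chi_{F}\geq 0$ with $Y=\sum_{i}\nu_{i}X_{i}$; this is the delicate core of \cite{fraserser3}, it is precisely where the Step~2 hypothesis is used, and it is not known in general in higher step. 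Since essentially all of the content of the theorem is concentrated in this classification, the proposal as written does not constitute a proof. The remaining steps are also only sketched --- for instance, the two-sided density bounds $c\,r^{Q-1}\leq P_{\mathbb{G}}(E,B_{\infty}(x_{0},r))\leq C\,r^{Q-1}$ do not follow from conditions (i)--(iii) of Definition~\ref{defboundary} together with the relative isoperimetric inequality alone; one needs a separate argument exploiting the fact that, by (ii), $|D_{\mathbb{G}}\chi_{E}(B_{\infty}(x_{0},r))|$ is comparable to $|D_{\mathbb{G}}\chi_{E}|(B_{\infty}(x_{0},r))$ for small $r$ --- but these are standard once the classification is granted.
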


The last statement in the above theorem allows us to rewrite
formula \eqref{div thm} as an integral on the reduced
boundary with respect to the $Q-1$ spherical Hausdorff measure as follows:
\begin{equation}\label{div thm boundary}
\int_{E}\mathrm{div}_{\mathbb{G}}g(x)dx=-\theta_\infty\int_{\partial_{\mathbb{G}}^{\ast}E}\langle
g,\nu_{E}\rangle d{\mathcal S}_{\infty}^{Q-1},
\end{equation}
(here $\theta_{\infty}$ is a constant depending on ${\mathbb{G}}$, see
\cite[Theorem 3.10]{fraserser3}), which looks much closer to the classical divergence theorem.
Moreover, the following analogue of \eqref{smooth} holds:
\begin{equation}    \label{perim S_infty}
P_{\mathbb{G}}(E,\cdot) = \theta_{\infty} \mathcal{S}_{\infty}^{Q-1}\res
\left(\mathcal{\partial}_{\mathbb{G}}^{\ast}E\right)  . 
\end{equation}

In order to apply the $L_{\mathrm{loc}}^{1}$ convergence of (\ref{blow-up1})
we will need also the following

\begin{remark}       \label{L1_loc L1}{\rm 
If $E,\left\{  E_{k}\right\}  _{k=1}^{\infty}\subset\mathbb{R}^{n}$ are measurable,  
$\chi_{E_{k}}\rightarrow\chi_{E}$ in 
$L_{\mathrm{loc}}^{1}({{\mathbb{R}}^{n}})$ and 
$f\in L^{1}\left(  \mathbb{R}^{n}\right)  \cap L^{\infty}\left(  \mathbb{R}^{n}\right)$, 
then $f\chi_{E_{k}}\rightarrow f\chi_{E}$ in $L^{1}({{\mathbb{R}}^{n}}).$ In fact,
given $\varepsilon>0$, there is a compact set $K$ such that
\[
\int_{{\mathbb R}^n\setminus K}|f|dx < \varepsilon,
\]
whence
\[
\int_{{{\mathbb{R}}^{n}}}\left\vert \left(  \chi_{E_{k}}-\chi_{E}\right)f\right\vert dx
\leq \varepsilon + 
\left\Vert f\right\Vert_{\infty}\int_{K}\left\vert \chi_{E_{k}}-\chi_{E}\right\vert dx 
\]
and the last integral tends to 0 as $k\to\infty$.
}\end{remark}

\subsection{Main results}

We state here the main results of this paper, namely Theorems~\ref{Main Theorem 1} 
and \ref{Main Theorem 2}.

\begin{theorem} \label{Main Theorem 1} 
Let $f\in L^{1}\left(  {{\mathbb{R}}^{n}}\right)  $. The quantity
\[
\limsup_{t\rightarrow0}
\left\Vert \nabla_{X}W_{t}f\right\Vert _{L^{1}\left({{\mathbb{R}}^{n}}\right)  }
\]
is finite if and only if $f\in BV\left(  \mathbb{G}\right)  $. In this case
the following holds:
\begin{align*}
|D_{\mathbb{G}}f|\left(  {{\mathbb{R}}^{n}}\right)  \leq &  
\liminf_{t\rightarrow0}\left\Vert \nabla_{X}W_{t}f\right\Vert _{L^{1}
\left({{\mathbb{R}}^{n}}\right)  } \leq\limsup_{t\rightarrow0}
\left\Vert \nabla_{X}W_{t}f\right\Vert_{L^{1}\left(  {{\mathbb{R}}^{n}}\right)  }\\
\leq &  (1+c)\left\vert D_{\mathbb{G}}f\right\vert \left(  {{\mathbb{R}}^{n}}\right)  ,
\end{align*}
where $c\geq0$ is a constant depending only on ${\mathbb{G}}$.
\end{theorem}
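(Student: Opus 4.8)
The plan is to prove the outer two inequalities separately: the lower bound by lower semicontinuity, and the upper bound by an explicit decomposition of $\nabla_X W_tf$ into a main term (reproducing $D_{\mathbb{G}}f$) and a commutator term (controlled by the Gaussian estimates), which is where the non-commutativity of $\mathbb{G}$ enters.

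First I would dispose of the lower bound together with the ``only if'' implication. For each fixed $t>0$ one has $X_iW_tf=f\ast(X_ih_t)$, and the derivative bounds of Theorem~\ref{heatkernel} give $\|X_ih_t\|_{L^1}<\infty$, so $W_tf\in W^{1,1}(\mathbb{G})$ and hence $|D_{\mathbb{G}}W_tf|(\mathbb{R}^n)=\|\nabla_XW_tf\|_{L^1(\mathbb{R}^n)}$. Since $W_tf\to f$ in $L^1(\mathbb{R}^n)$ as $t\to0$, the lower semicontinuity \eqref{sup L1} applied to the family $\{W_tf\}$ yields
\[
|D_{\mathbb{G}}f|(\mathbb{R}^n)\le\liminf_{t\to0}\|\nabla_XW_tf\|_{L^1(\mathbb{R}^n)}.
\]
In particular, if the $\limsup$ is finite then so is the $\liminf$, and this bound forces $f\in BV(\mathbb{G})$; this settles the first inequality of the statement and the reverse implication. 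The remaining task is the upper bound, under the standing assumption $f\in BV(\mathbb{G})$.

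Assume now $f\in BV(\mathbb{G})$. Starting from $X_iW_tf(x)=\int_{\mathbb{R}^n}f(y)\,(X_ih_t)(y^{-1}\circ x)\,dy$, the key algebraic step is to rewrite $X_ih_t$ by the structural identity \eqref{Xi_XiR}, namely $X_ih_t=X_i^Rh_t+\sum_{k=q+1}^nX_k^R(c_i^kh_t)$, and then integrate by parts through \eqref{dX(convoluz)}. This splits $X_iW_tf=A_i+B_i$ into a main term and a commutator term. For the main term, \eqref{dX(convoluz)} with $g=h_t$ gives $A_i(x)=\int_{\mathbb{R}^n}h_t(y^{-1}\circ x)\,d(X_if)(y)$, so $A=(A_1,\dots,A_q)$ is the heat semigroup applied to the vector measure $D_{\mathbb{G}}f=-\sigma_f|D_{\mathbb{G}}f|$. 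Using $|\sigma_f|=1$, the pointwise bound $|A(x)|\le\int h_t(y^{-1}\circ x)\,d|D_{\mathbb{G}}f|(y)$, Fubini's theorem, and the normalization $\int h_t\,dx=1$ of Theorem~\ref{heatkernel}(iv), I obtain $\int_{\mathbb{R}^n}|A(x)|\,dx\le|D_{\mathbb{G}}f|(\mathbb{R}^n)$. This produces the ``$1$'' in $(1+c)$.

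The crux is the commutator term. For each $k>q$ I would expand $X_k^R$ via \eqref{theta} as a sum of products $X_{i_1}^R\cdots X_{i_{\omega_k}}^R$ with all $i_j\le q$, and then apply \eqref{dX(convoluz)} \emph{once} to transfer the outermost factor $X_{i_1}^R$ onto $f$. This is forced: a direct estimate of $X_k^R(c_i^kh_t)$ scales like a homogeneity-one operator applied to $h_t$ (degree $\omega_k$ from $X_k^R$ minus degree $\omega_k-1$ from $c_i^k$), whose $L^1$ norm blows up like $t^{-1/2}$. After moving one degree-one derivative onto $f$, the integral against $f\,dy$ becomes one against the \emph{finite} measure $X_{i_1}f$, paired with the kernel $K_t=X_{i_2}^R\cdots X_{i_{\omega_k}}^R(c_i^kh_t)$, which now carries $\omega_k-1$ derivatives of degree one and the coefficient $c_i^k$ of degree $\omega_k-\omega_i=\omega_k-1$: its net homogeneity relative to $h_t$ is exactly zero. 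By the derivative estimates \eqref{BLU 2} in the sharpened form of Remark~\ref{Remark Gaussian} and the parabolic rescaling $z=D(\sqrt t)w$, this gives $\|K_t\|_{L^1(\mathbb{R}^n)}\le C$ uniformly in $t$, whence $\int_{\mathbb{R}^n}|B(x)|\,dx\le c\,|D_{\mathbb{G}}f|(\mathbb{R}^n)$ after using \eqref{dX_i D_G}, with $c$ depending only on $\mathbb{G}$. Combining with the estimate for $A$ and the triangle inequality $|A+B|\le|A|+|B|$ in $\mathbb{R}^q$ yields $\|\nabla_XW_tf\|_{L^1}\le(1+c)|D_{\mathbb{G}}f|(\mathbb{R}^n)$ for all small $t$, and passing to the $\limsup$ finishes the proof.

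I expect the main obstacle to be precisely this uniform-in-$t$ control of $\|K_t\|_{L^1}$. It is the exact cancellation between the number $\omega_k-1$ of right-invariant derivatives and the homogeneity degree $\omega_k-1$ of the structural coefficients $c_i^k$ that brings the net scaling back to $t^{-Q/2}$; without the single integration by parts dictated by \eqref{theta} and \eqref{dX(convoluz)}, the estimate diverges. This is also where the argument departs from the Euclidean case, in which the commutator $B_i$ is absent and one recovers an equality rather than the two-sided bound.
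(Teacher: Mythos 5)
Your proposal is correct and follows essentially the same route as the paper: the lower bound and the ``only if'' direction via lower semicontinuity, and the upper bound via the decomposition $X_ih_t=X_i^Rh_t+\sum_{k>q}X_k^R(c_i^kh_t)$ from \eqref{Xi_XiR}, the expansion \eqref{theta}, a single transfer of a degree-one right-invariant derivative onto $f$ through \eqref{dX(convoluz)}, and the uniform-in-$t$ $L^1$ bound on the residual kernels obtained by first exploiting their exact $(-Q)$-homogeneity under $z=D(\sqrt t)w$ and only then applying the Gaussian bound at $t=1$ (integrating the Euclidean-norm Gaussian bound directly at time $t$ would lose a factor $t^{(n-Q)/2}$), which is precisely the content of Lemma~\ref{lemmanuclei} and Proposition~\ref{Prop XW-WX}. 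The only (harmless) deviation is in the main term, which you bound directly by $|D_{\mathbb{G}}f|(\mathbb{R}^n)$ via the pointwise estimate, Fubini and $\int h_t\,dx=1$, whereas the paper runs a duality argument testing against $g\in C_0^1$ with $\Vert g\Vert_\infty\le1$ and using $\Vert W_tg\Vert_\infty\le1$; both yield the same constant.
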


Theorem~\ref{Main Theorem 1} shows that even in Carnot groups Definition~\ref{defBV} 
is equivalent to the analogue of De Giorgi's original definition
\eqref{variation3} given in the Euclidean case, even though we don't know if
the limit exists. Section~\ref{proof1} is devoted to the proof of Theorem~\ref{Main Theorem 1}.

The next result requires the additional hypothesis that $\mathbb{G}$ is a
Carnot group of Step 2. As we explain in Section~\ref{proof2}, the reason is
that our proof uses the rectifiability of the reduced boundary of a set of
finite perimeter in $\mathbb{G}$, which is known only in the Step 2 case, see
\cite{fraserser3}. Our argument works whenever the rectifiability theorem is
true, and then would extend immediately to all cases where the 
rectifiability result were extended.

We start by defining the function
\begin{equation}    \label{phiG}
\phi_{\mathbb{G}}(\nu)=\int_{T_{\mathbb{G}}(\nu)}h(1,x)dx, 
\end{equation}
for vectors $\nu\in\mathbb{R}^{q}$. Here the integral is taken over
$T_{\mathbb{G}}(\nu)$, which is a hyperplane in $\mathbb{R}^{n};$ to simplify
notation, we keep denoting by $dx$ the $\left(  n-1\right)  $-dimensional
Lebegue measure over $T_{\mathbb{G}}(\nu)$.

The function $\phi_{\mathbb{G}}$ is continuous and, using the Gaussian bounds
\eqref{BLU 1}, there are constants $c_{1},c_{2}$, depending on the group, such
that
\begin{equation} \label{phi bound}
0<c_{1}\leq\phi_{\mathbb{G}}(\nu)\leq c_{2}.
\end{equation}

\begin{remark} [Rotation invariance of heat kernels]\label{Remark rotation}{\rm 
Notice that if the heat kernel is invariant under horizontal rotations (that is,
under Euclidean rotations in the space $\mathbb{R}^{q}$ of the first variables
$x_{1},x_{2},...,x_{q}$ of $\mathbb{R}^{n}$), then $\phi_{\mathbb{G}}$ is
actually a constant (that is, independent of $\nu$). This happens for instance
in all groups of Heisenberg type (briefly called H-type groups), in view of
the known formula assigning the heat kernel in that context (for a discussion
of H-type groups, see for instance \cite[Chap. 18]{BLUbook}; for the
computation of the heat kernel in this context, see \cite{R}). By the way, we
point out that in the Heisenberg groups with more than one vertical direction the heat kernel is
invariant for horizontal rotations, but the sublaplacian is not. This can 
be seen through a direct computation based on (3.14) in \cite{BLUbook}. On
the other hand, from the general formula assigning the heat kernel in Carnot
groups of step two, proved by \cite{Cy} (see also \cite{BGG}), one can expect
the existence of groups of step two (more general than H-type groups) where
the heat kernel is not invariant under horizontal rotations and the function
$\phi_{\mathbb{G}}$ is not constant. 
}\end{remark}

Let us first state our next result in the case of perimeters.

\begin{theorem} \label{diffusionperimeters}
Let $\mathbb{G}$ be of Step 2. If $E\subset {{\mathbb{R}}^{n}}$ is a set of finite 
$\mathbb{G}$-perimeter, the following equality holds
\begin{equation}    \label{mainequality}
\lim_{t\rightarrow0}\frac{1}{2\theta_{\infty}\sqrt{t}}\int_{E^{c}}W_{t}\chi_{E}(x)dx=
\int_{\mathcal{\partial}_{\mathbb{G}}^{\ast}E}\phi_{\mathbb{G}}(\nu_{E}) 
d{\mathcal{S}}_{\infty}^{Q-1} .
\end{equation}
Conversely, if either $E$ or $E^c$ has finite measure and 
\[
\liminf_{t\rightarrow0}\frac{1}{\sqrt{t}}\int_{E^{c}}W_{t}\chi_{E}(x)\ dx<+\infty ,
\]
then $E$ has finite perimeter, and \eqref{mainequality} holds.
\end{theorem}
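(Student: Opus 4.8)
The plan is to prove the two implications separately; the engine for the direct part is a flux identity. Since $u(t,\cdot)=W_{t}\chi_{E}$ is smooth and solves $\partial_{t}u=Lu=\mathrm{div}_{\mathbb{G}}\nabla_{X}u$, I would differentiate the heat content in $t$ and apply the $\mathbb{G}$-divergence theorem \eqref{div thm} on $E^{c}$ (after a cut-off justified by the Gaussian decay \eqref{BLU 1}), using $\nu_{E^{c}}=-\nu_{E}$, to obtain
\[
\frac{d}{dt}\int_{E^{c}}W_{t}\chi_{E}\,dx=\int_{\partial_{\mathbb{G}}^{\ast}E}\langle\nabla_{X}W_{t}\chi_{E},\nu_{E}\rangle\,dP_{\mathbb{G}}(E).
\]
Integrating in $t$ and dividing by $2\theta_{\infty}\sqrt{t}$, the whole problem reduces to identifying the short-time limit of the rescaled boundary flux $\sqrt{t}\,\langle\nabla_{X}W_{t}\chi_{E},\nu_{E}\rangle$.

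The value of this limit is read off from the half-space $S_{\mathbb{G}}^{+}(\nu)$. Because the group law is Euclidean addition on the first $q$ coordinates and $h$ is symmetric, $W_{t}\chi_{S_{\mathbb{G}}^{+}(\nu)}$ depends on $x$ only through $\langle\pi x,\nu\rangle$: one has $W_{t}\chi_{S_{\mathbb{G}}^{+}(\nu)}(x)=F_{t}(\langle\pi x,\nu\rangle)$, where $F_{t}$ is the distribution function of the $\nu$-marginal of $h(t,\cdot)$. Since \eqref{X triang} gives $X_{i}\langle\pi x,\nu\rangle=\nu_{i}$, one finds $\nabla_{X}W_{t}\chi_{S_{\mathbb{G}}^{+}(\nu)}=F_{t}'(\langle\pi x,\nu\rangle)\,\nu$, so on the interface $T_{\mathbb{G}}(\nu)$ the flux density equals $F_{t}'(0)$. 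By the scaling property (ii) of Theorem~\ref{heatkernel} the marginal density at the origin is $F_{t}'(0)=t^{-1/2}F_{1}'(0)$, and since $|\nu|=1$ the coarea factor is trivial, so $F_{1}'(0)=\int_{T_{\mathbb{G}}(\nu)}h(1,\cdot)=\phi_{\mathbb{G}}(\nu)$ by \eqref{phiG}. Thus the rescaled flux for the model is exactly $\phi_{\mathbb{G}}(\nu)$; integrating $t^{-1/2}$ over $(0,t)$ produces the factor $2\sqrt{t}$ and $dP_{\mathbb{G}}=\theta_{\infty}d\mathcal{S}_{\infty}^{Q-1}$ accounts for the remaining normalization, which is where $\phi_{\mathbb{G}}$ and the constant $2\theta_{\infty}$ enter.

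For a general $E$ of finite perimeter I would show that at $P_{\mathbb{G}}(E)$-a.e.\ $x_{0}\in\partial_{\mathbb{G}}^{\ast}E$ the rescaled flux $\sqrt{t}\,\langle\nabla_{X}W_{t}\chi_{E},\nu_{E}\rangle(x_{0})$ converges to $\phi_{\mathbb{G}}(\nu_{E}(x_{0}))$. This comes from translating and dilating around $x_{0}$, writing $\nabla_{X}W_{t}\chi_{E}(x_{0})$ as an integral of the rescaled kernel derivative against $\chi_{E_{\sqrt{t},x_{0}}}$, and invoking the blow-up $\chi_{E_{\sqrt{t},x_{0}}}\to\chi_{S_{\mathbb{G}}^{+}(\nu_{E}(x_{0}))}$ in $L_{\mathrm{loc}}^{1}$ (Theorem~\ref{rectifiability}), upgraded through the Gaussian bounds \eqref{BLU 2} and Remark~\ref{L1_loc L1}. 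A uniform tightness bound $\frac{1}{\sqrt{t}}\int_{E^{c}}W_{t}\chi_{E}\leq C\,P_{\mathbb{G}}(E)$ — which follows from $\int_{E^{c}}W_{t}\chi_{E}\leq\|W_{t}\chi_{E}-\chi_{E}\|_{L^{1}}\leq C\sqrt{t}\,P_{\mathbb{G}}(E)$, the last estimate being the $BV$ convergence rate proved via \eqref{BLU 2} as in Theorem~\ref{Main Theorem 1} — provides the domination. Dominated convergence over $\partial_{\mathbb{G}}^{\ast}E$, together with the time-averaging $\frac{1}{\sqrt{t}}\int_{0}^{t}\tau^{-1/2}(\cdots)\,d\tau$, then yields \eqref{mainequality}. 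For the converse, assuming $\liminf_{t\to0}\frac{1}{\sqrt{t}}\int_{E^{c}}W_{t}\chi_{E}=:M<\infty$ (with $E$ or $E^{c}$ of finite measure), I would reverse the estimate: the bounds \eqref{BLU 2} control $\|\nabla_{X}W_{s}\chi_{E}\|_{L^{1}}$ by the heat content at scale $\sim s$, so $\limsup_{s\to0}\|\nabla_{X}W_{s}\chi_{E}\|_{L^{1}}\leq C'M<\infty$, and Theorem~\ref{Main Theorem 1} forces $\chi_{E}\in BV(\mathbb{G})$; the direct part then gives \eqref{mainequality}.

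I expect the main obstacle to be the globalization in the direct part: upgrading the merely $L_{\mathrm{loc}}^{1}$ blow-up convergence to pointwise convergence of the rescaled flux at $P_{\mathbb{G}}$-a.e.\ boundary point and, above all, securing a uniform, $P_{\mathbb{G}}$-integrable domination so that the limit may be passed inside the boundary integral. The anisotropy and non-compact support of $h$ force one to control, through the Gaussian tails, the contribution of the part of $E$ that is far from $x_{0}$ and not yet flat. This is precisely where the rectifiability of $\partial_{\mathbb{G}}^{\ast}E$ — available only in Step~2 (Theorem~\ref{rectifiability}) — is indispensable, since it supplies the flat half-space blow-up $\mathcal{S}_{\infty}^{Q-1}$-a.e.\ and a covering of $\partial_{\mathbb{G}}^{\ast}E$ on which the tightness bound can be localized.
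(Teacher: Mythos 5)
Your direct implication follows the paper's own route almost step for step: differentiate the heat content, convert the volume integral into a boundary flux via the $\mathbb{G}$-divergence theorem, rescale, use the blow-up Theorem~\ref{rectifiability} together with Remark~\ref{L1_loc L1} to pass to the half-space model at $P_{\mathbb{G}}(E)$-a.e.\ point of $\partial_{\mathbb{G}}^{\ast}E$, and conclude by dominated convergence (the paper replaces your time-averaging by De L'Hospital applied to $g(t^2)$, which is the same device). The only cosmetic difference there is how the half-space constant is extracted: you read it off from the one-dimensional marginal $F_t$ of $h(t,\cdot)$ in the direction $\nu$, while the paper evaluates $\sum_i\nu_i\int_{S^-_{\mathbb{G}}(\nu)}X_ih(1,z)\,dz$ by a Euclidean integration by parts using the triangular form \eqref{X triang}; both give $\phi_{\mathbb{G}}(\nu)$. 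Where you genuinely diverge is the converse. The paper rewrites $\int_{E^c}W_t\chi_E$ as $\tfrac12\int h(1,w)\int|\chi_E(x\circ D(\sqrt t)w)-\chi_E(x)|\,dx\,dw$, applies Fatou to find $n$ independent directions with bounded difference quotients, and invokes Lemma~\ref{lemmaCap}; you instead bound $\|\nabla_XW_s\chi_E\|_{L^1}$ by the heat content at a comparable scale and conclude via the lower semicontinuity half of Theorem~\ref{Main Theorem 1}. Your route is shorter and bypasses Lemma~\ref{lemmaCap} entirely, but the key estimate is not an immediate consequence of \eqref{BLU 2} as you suggest: you must first use $\int\nabla_Xh(t,y^{-1}\circ x)\,dy=0$ (from Theorem~\ref{heatkernel}(iv)) to write $\nabla_XW_t\chi_E(x)=\int\nabla_Xh(t,y^{-1}\circ x)(\chi_E(y)-\chi_E(x))\,dy$, and then compare the Gaussian \emph{upper} bound for $\nabla_Xh$ with the Gaussian \emph{lower} bound \eqref{BLU 1} for $h$ at a dilated time, obtaining $\|\nabla_XW_t\chi_E\|_{L^1}\le Ct^{-1/2}\int_{E^c}W_{\mathbf{c}^2t}\chi_E\,dx$; with this in hand the argument closes, and it is a legitimate alternative to the paper's.

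Two small corrections. First, from a $\liminf$ hypothesis you only control $\|\nabla_XW_{s_k}\chi_E\|_{L^1}$ along a sequence $s_k\downarrow0$, not the $\limsup$; this is enough, because \eqref{lowersc} bounds $|D_{\mathbb{G}}\chi_E|$ by the $\liminf$, but you should not claim a $\limsup$ bound. Second, the domination needed to pass to the limit in the boundary integral is not the global tightness bound you propose (which controls an integral over $\mathbb{R}^n$, not the integrand on $\partial_{\mathbb{G}}^{\ast}E$): after rescaling, the integrand is bounded pointwise by $\|\nabla_Xh(1,\cdot)\|_{L^1(\mathbb{R}^n)}$, a constant, which is integrable against the finite measure $\mathcal{S}_{\infty}^{Q-1}\res\partial_{\mathbb{G}}^{\ast}E$; this is exactly how the paper argues, and it removes the difficulty you anticipate as the ``main obstacle''.
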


For general $BV$ functions, we have the following.

\begin{theorem}    \label{Main Theorem 2} 
Let $\mathbb{G}$ be of Step 2. Then, if $f\in BV({\mathbb{G}})$ the following equality holds:
\begin{equation}     \label{diffusionbv}
\int_{{\mathbb{R}}^{n}}\phi_{\mathbb{G}}(\sigma_{f})d\left\vert D_{\mathbb{G}}f\right\vert 
=\lim_{t\rightarrow0}\frac{1}{4\sqrt{t}}
\int_{{{\mathbb{R}}^{n}}\times{{\mathbb{R}}^{n}}}|f(x)-f(y)|h(t,y^{-1}\circ x)dxdy,
\end{equation}
where $\sigma_{f}$ is defined in \eqref{defsigmaf}. Conversely, if $f\in L^1({\mathbb R}^n)$ 
and the liminf of the quantity on the right hand side is finite, then $f\in BV(\mathbb{G})$
and \eqref{diffusionbv} holds.
\end{theorem}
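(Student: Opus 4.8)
The plan is to reduce the statement to the already established case of characteristic functions, Theorem~\ref{diffusionperimeters}, by means of the coarea formula. Set
\[
\Phi(t):=\frac1{4\sqrt t}\int_{{\mathbb R}^n\times{\mathbb R}^n}|f(x)-f(y)|\,h(t,y^{-1}\circ x)\,dx\,dy .
\]
Writing $E_\tau=\{f>\tau\}$, the layer--cake identity $|f(x)-f(y)|=\int_{\mathbb R}|\chi_{E_\tau}(x)-\chi_{E_\tau}(y)|\,d\tau$ together with Tonelli's theorem gives $\Phi(t)=\int_{\mathbb R}\psi_\tau(t)\,d\tau$, where $\psi_\tau(t)$ is the analogous quantity with $f$ replaced by $\chi_{E_\tau}$. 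For a single set $E$ the integrand $|\chi_E(x)-\chi_E(y)|$ is the indicator that $x,y$ lie on opposite sides of $E$, and the symmetry $h(t,z^{-1})=h(t,z)$ from Theorem~\ref{heatkernel}(v) makes the two contributions equal; hence $\psi_\tau(t)=\frac1{2\sqrt t}\int_{E_\tau^c}W_t\chi_{E_\tau}\,dx$. By Theorem~\ref{diffusionperimeters}, formula \eqref{perim S_infty}, and the evenness of $\phi_{\mathbb G}$ (note $T_{\mathbb G}(\nu)=T_{\mathbb G}(-\nu)$, and $\nu_E=-\sigma_{\chi_E}$, so $\phi_{\mathbb G}(\nu_E)=\phi_{\mathbb G}(\sigma_{\chi_E})$), it follows that for a.e.\ $\tau$
\[
\lim_{t\to0}\psi_\tau(t)=\int_{{\mathbb R}^n}\phi_{\mathbb G}(\sigma_{\chi_{E_\tau}})\,d|D_{\mathbb G}\chi_{E_\tau}| .
\]

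With these reductions the remaining task is to exchange the limit $t\to0$ with $\int_{\mathbb R}(\cdot)\,d\tau$. One inequality is free: since $\psi_\tau(t)\ge0$, Fatou's lemma together with the identification above yields $\int_{\mathbb R}\big(\lim_{t\to0}\psi_\tau(t)\big)d\tau\le\liminf_{t\to0}\Phi(t)$. To upgrade this to equality I would establish a uniform bound, which I expect to be the \emph{main obstacle}: there is a constant $C=C(\mathbb G)$ with $\psi_\tau(t)\le C\,P_{\mathbb G}(E_\tau)$ for all small $t>0$.

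The idea for this bound is to differentiate in time. Assuming (by the symmetry $\int_{E^c}W_t\chi_E=\int_E W_t\chi_{E^c}$ and swapping $E\leftrightarrow E^c$) that the relevant set has finite measure, one writes, with $u_s=W_s\chi_E$,
\[
\int_{E^c}W_t\chi_E\,dx=\int_{{\mathbb R}^n}\chi_E(\chi_E-W_t\chi_E)\,dx=-\int_0^t\!\int_{{\mathbb R}^n}\chi_E\,L u_s\,dx\,ds .
\]
Integrating by parts via \eqref{byparts_BV} turns the inner integral into $-\sum_j\int X_jW_s\chi_E\,d(X_j\chi_E)$, which by \eqref{dX_i D_G} is bounded in absolute value by $\big(\sum_j\|X_jW_s\chi_E\|_\infty\big)P_{\mathbb G}(E)$. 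Since $X_j$ is left invariant, $X_jW_s\chi_E=\chi_E*(X_jh_s)$, and the Gaussian estimate \eqref{BLU 2} gives $\|X_jh_s\|_{L^1}\le c\,s^{-1/2}$, whence $\|X_jW_s\chi_E\|_\infty\le c\,s^{-1/2}$; integrating $s^{-1/2}$ over $(0,t)$ produces the factor $\sqrt t$ and yields $\psi_\tau(t)\le C\,P_{\mathbb G}(E_\tau)$. Granting this, the dominating function $C\,P_{\mathbb G}(E_\tau)$ is $\tau$-integrable by \eqref{coareaformula}, so dominated convergence lets the limit pass inside. It then remains to recognise the resulting integral as $\int_{{\mathbb R}^n}\phi_{\mathbb G}(\sigma_f)\,d|D_{\mathbb G}f|$; this follows from the coarea formula \eqref{coareag} applied to the bounded continuous function $g=\phi_{\mathbb G}(\sigma_f)$, once one uses the standard directional consistency $\sigma_{\chi_{E_\tau}}=\sigma_f$ $|D_{\mathbb G}\chi_{E_\tau}|$-a.e.\ for a.e.\ $\tau$, underlying the disintegration $D_{\mathbb G}f=\int_{\mathbb R}D_{\mathbb G}\chi_{E_\tau}\,d\tau$. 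This proves \eqref{diffusionbv} for $f\in BV(\mathbb G)$.

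For the converse, assume $f\in L^1({\mathbb R}^n)$ with $\liminf_{t\to0}\Phi(t)<\infty$. Fatou's lemma gives $\int_{\mathbb R}\liminf_{t\to0}\psi_\tau(t)\,d\tau<\infty$, so for a.e.\ $\tau$ the quantity $\liminf_{t\to0}\frac1{\sqrt t}\int_{E_\tau^c}W_t\chi_{E_\tau}\,dx$ is finite. Since $f\in L^1$, Chebyshev's inequality shows that $E_\tau$ has finite measure for $\tau>0$ and $E_\tau^c$ has finite measure for $\tau<0$, so the converse part of Theorem~\ref{diffusionperimeters} applies: $E_\tau$ has finite perimeter for a.e.\ $\tau$, with $\lim_{t\to0}\psi_\tau(t)=\int\phi_{\mathbb G}(\sigma_{\chi_{E_\tau}})\,d|D_{\mathbb G}\chi_{E_\tau}|\ge c_1P_{\mathbb G}(E_\tau)$ by the lower bound in \eqref{phi bound}. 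Hence $\int_{\mathbb R}P_{\mathbb G}(E_\tau)\,d\tau\le c_1^{-1}\liminf_{t\to0}\Phi(t)<\infty$, and the converse part of Proposition~\ref{coarea} yields $f\in BV(\mathbb G)$; equality \eqref{diffusionbv} then follows from the direct part already proved.
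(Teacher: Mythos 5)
Your proposal is correct and follows essentially the same route as the paper: reduction to Theorem~\ref{diffusionperimeters} via the layer--cake identity and the coarea formula, dominated convergence justified by a uniform bound $\psi_\tau(t)\le C\,P_{\mathbb G}(E_\tau)$, and Fatou plus the converse parts of Theorem~\ref{diffusionperimeters} and Proposition~\ref{coarea} for the reverse implication. The uniform bound you single out as the main obstacle is exactly the paper's estimate \eqref{marcestimate}, which it proves by the same scaling argument $\|\nabla_X h(s,\cdot)\|_{L^1}\le c\,s^{-1/2}$ after an equivalent integration by parts.
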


\begin{remark}     \label{Remark H-type}{\rm 
Note that, in view of (\ref{perim S_infty}),
(\ref{phi bound}) and Remark~\ref{Remark rotation}, the right-hand side of
(\ref{mainequality}) is always equivalent to, and in groups of Heisenberg-type
is a multiple of, $P_{\mathbb{G}}\left(  E\right)  $; the left-hand side of
(\ref{diffusionbv}) is always equivalent to, and in groups of Heisenberg-type
is a multiple of, $\left\vert D_{\mathbb{G}}f\right\vert \left(\mathbb{R}^{n}\right)$. 
}\end{remark}

The proofs of the above results will be given in Section~\ref{proof2}.

\section{Proof of Theorem~\ref{Main Theorem 1}}

\label{proof1}

In this section we give the proof of Theorem~\ref{Main Theorem 1}; first of
all, we notice that $W_{t}f\rightarrow f$ in $L^{1}$ as $t\rightarrow0;$
moreover $W_{t}f\in C^{\infty}\left(  \mathbb{R}^{n}\right)  $, hence
\[
|D_{\mathbb{G}}W_{t}f|({\mathbb{R}^{n}})=\int_{{{\mathbb{R}}^{n}}}|\nabla
_{X}W_{t}f(x)|dx.
\]
Therefore by the lower semicontinuity of the variation, that is (\ref{sup L1}), 
the inequality
\begin{equation}   \label{lowersc}
|D_{\mathbb{G}}f|({\mathbb{R}^{n}})\leq\liminf_{t\rightarrow0}
\int_{{{\mathbb{R}}^{n}}}|\nabla_{X}W_{t}f(x)|dx
\end{equation}
holds, and then it remains to prove only the upper bound. We also notice that
inequality (\ref{lowersc}) implies that if $f\in L^{1}({{\mathbb{R}}^{n}})$ is
such that the right hand side of (\ref{lowersc}) is finite, then $f\in BV({\mathbb{G}})$. 
So, Theorem~\ref{Main Theorem 1} will follow if we prove that
\begin{equation}   \label{limsupbv}
\limsup_{t\rightarrow0}\int_{{{\mathbb{R}}^{n}}}|\nabla_{X}W_{t}f(x)|dx
\leq\left(  1+c\right)  |D_{\mathbb{G}}f|({\mathbb{R}^{n}})
\end{equation}
for all $f\in BV(\mathbb{G})$.

We start with the following result, which will be useful in the proof of
Theorem~\ref{Main Theorem 1}. It is a consequence of the algebraic properties
of $\mathbb{G}$ and the Gaussian estimates on the heat kernel $h$ (see Section~\ref{sec carnot}).

\begin{lemma}   \label{lemmanuclei} 
For $i,j\in\{1,\ldots,q\}$, let $G_{j}^{i}$ be the kernel defined by the identity:
\begin{align}
\sum_{k=q+1}^{n}X_{k}^{R}(c_{i}^{k}(\cdot)h(t,\cdot))(z)
&  =\sum_{j=1}^{q}X_{j}^{R}\sum_{k=q+1}^{n}\sum_{1\leq i_{l}\leq q}
\vartheta_{ji_{2}\ldots i_{\omega_{k}}}^{k}X_{i_{2}}^{R}...X_{i_{\omega_{k}}}^{R}\,
\left(  c_{i}^{k}(\cdot)\,h\left(  t,\cdot\right)  \right)(z)
\nonumber\label{defgij}
\\
&  =\sum_{j=1}^{q}X_{j}^{R}G_{j}^{i}(t,z),
\end{align}
where the functions $c_{i}^{k}$ and the constants $\vartheta_{i_{1}i_{2}...i_{\omega_{k}}}^{k}$ 
are those introduced in \eqref{Xi_XiR} and \eqref{theta}, respectively. 
They have the following properties:

\begin{itemize}
\item[(i)] $G_{j}^{i}\left(  \lambda^{2}t,D\left(  \lambda\right)  z\right)  =
\lambda^{-Q}G_{j}^{i}\left(  t,z\right)  $ for any $\lambda>0,t>0,z\in{{\mathbb{R}}^{n}}$;
\item[(ii)] there is a positive constant $c$, independent of $t$, such that
\[
\int_{{{\mathbb{R}}^{n}}}\left\vert G_{j}^{i}\left(  t,z\right)  \right\vert dz\leq c;
\]
\item[(iii)] ${\displaystyle \int_{{{\mathbb{R}}^{n}}}G_{j}^{i}\left(  t,z\right)  dz=0
\text{ for any }t>0;}$
\item[(iv)] for every $\varepsilon>0$, $t_{0}>0$ there exists $R>0$ such that
\[
\int_{{{\mathbb{R}}^{n}}\setminus B_{R}} |G_{j}^{i}(t,z)|dz < \varepsilon
\text{ for any }0<t\leq t_{0}.
\]
\end{itemize}
\end{lemma}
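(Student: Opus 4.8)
The plan is to verify the four properties directly from the defining identity, since everything is governed by the homogeneity and Gaussian bounds of $h$ recorded in Theorem~\ref{heatkernel} together with the algebraic facts \eqref{Xi_XiR}--\eqref{theta}. First I would unwind the definition: each $G_j^i(t,z)$ is, up to the constants $\vartheta^k_{ji_2\ldots i_{\omega_k}}$, a finite sum of terms of the form $X_{i_2}^R\cdots X_{i_{\omega_k}}^R\bigl(c_i^k(\cdot)\,h(t,\cdot)\bigr)$, that is, derivatives of order $\omega_k-1$ along right-invariant degree-one vector fields applied to $c_i^k\,h$, where $c_i^k$ is a homogeneous polynomial of degree $\omega_k-\omega_i=\omega_k-1$ (recall $\omega_i=1$ for $i\le q$). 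This is exactly the situation covered by the sharpened estimate in Remark~\ref{Remark Gaussian}, applied with the right-invariant fields in place of the $X_i$'s.

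For (i), the scaling identity is a bookkeeping computation. Since $h$ satisfies $h(\lambda^2 t,D(\lambda)z)=\lambda^{-Q}h(t,z)$ by (ii) of Theorem~\ref{heatkernel}, $c_i^k$ is homogeneous of degree $\omega_k-1$, and each $X_{i_l}^R$ is homogeneous of degree one, differentiating $\omega_k-1$ times lowers the homogeneity degree by $\omega_k-1$. A clean way to organize this is to observe that $\sum_{k>q}X_k^R(c_i^k(\cdot)h(t,\cdot))$ equals $X_if-X_i^Rf$ applied to $h$ in the convolution sense, hence inherits the degree $-Q$ homogeneity of $h$ itself; then the rewriting via \eqref{theta} only redistributes derivatives and does not change the total homogeneity, so $\sum_j X_j^R G_j^i$ is $(-Q)$-homogeneous, and I would read off the individual $(-Q)$-homogeneity of each $G_j^i$ from the factored form. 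For (ii), I would apply the Remark~\ref{Remark Gaussian} bound with $j'=\omega_k-1$ and the $\omega_k-1$ derivatives, which gives $|G_j^i(t,z)|\le c\,t^{-(Q+(\omega_k-1)-(\omega_k-1))/2}e^{-|z|^2/\mathbf{c}t}=c\,t^{-Q/2}e^{-|z|^2/\mathbf{c}t}$; integrating in $z$ and substituting $z=D(\sqrt t)w$ (whose Jacobian is $t^{Q/2}$) removes the $t$-dependence and yields a finite bound independent of $t$.

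Property (iii) is the vanishing-integral statement, and I expect it to be the main point of the lemma. Each $G_j^i(t,\cdot)$ is of the form $X_{i_2}^R\cdots X_{i_{\omega_k}}^R(c_i^k\,h)$ summed over $k$, and I would argue that its integral vanishes because it is a total $X^R$-derivative of an integrable function with Gaussian decay: more precisely, $\int_{\mathbb R^n}X_l^R F\,dz=0$ for any $F$ decaying like $e^{-|z|^2/\mathbf{c}t}$, since $X_l^R$ is a divergence-form (transpose $-X_l^R$, as in the integration-by-parts structure recorded for the $X_i$'s) operator and the boundary terms vanish. The cleanest route is to note $\omega_k-1\ge 1$ for every $k>q$, so each surviving term carries at least one outer right-invariant derivative $X_{i_{\omega_k}}^R$; integrating that outermost derivative against the constant function $1$ gives zero by the Gaussian decay of the remaining factor. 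I would state this as a small auxiliary fact ($\int X_l^R G\,dz=0$ for Schwartz-type $G$) and invoke it termwise. Finally, (iv) is uniform tail-smallness: using the same pointwise Gaussian bound from (ii), $\int_{\mathbb R^n\setminus B_R}|G_j^i(t,z)|\,dz\le c\int_{\mathbb R^n\setminus B_R}t^{-Q/2}e^{-|z|^2/\mathbf{c}t}\,dz$, and after the dilation substitution $z=D(\sqrt t)w$ the domain $\mathbb R^n\setminus B_R$ maps into (a set containing) $\mathbb R^n\setminus B_{R/\sqrt{t_0}}$ for $t\le t_0$ by homogeneity of $|\cdot|$ under $D(\lambda)$; choosing $R$ large makes the resulting $t$-independent Gaussian tail integral smaller than $\varepsilon$ uniformly in $t\le t_0$. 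The only delicate point in (iv) is handling the anisotropy of $D(\sqrt t)$ correctly, so I would phrase the tail estimate directly in terms of the Euclidean Gaussian bound on $h$ and a routine change of variables rather than the homogeneous norm.
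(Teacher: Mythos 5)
Your proposal is correct and follows essentially the same route as the paper: homogeneity bookkeeping for (i), the sharpened Gaussian bound of Remark~\ref{Remark Gaussian} with $j'=\omega_k-1$ plus the dilation $z=D(\sqrt t)w$ for (ii) and (iv), and the observation that each term of $G_j^i$ is a total right-invariant derivative (since $\omega_k\ge 2$ for $k>q$) whose integral vanishes by $(X_l^R)^T=-X_l^R$ for (iii). The only cosmetic slip is calling $X_{i_{\omega_k}}^R$ the outermost derivative in (iii) when it is $X_{i_2}^R$ that is applied last, but this does not affect the argument.
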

\begin{proof}
(i) holds because $c_{i}^{k}(\cdot)$ is homogeneous of degree $\omega_{k}-\omega_{i}=\omega_{k}-1$ 
(for $i=1,2,...,q$), $h(t,\cdot)$ is homogeneous of degree $-Q,$ and 
$X_{i_{2}}^{R}...X_{i_{\omega_{k}}}^{R}$ is a homogeneous differential operator of degree 
$\omega_{k}-1.$ For the same reason, the Gaussian estimates on $h$ also imply, by 
Remark~\ref{Remark Gaussian},
\begin{equation} \label{Gaussian G}
|G_{j}^{i}(t,z)|=\left\vert X_{i_{2}}^{R}\ldots X_{i_{\omega_{k}}}^{R}\,
\left(  c_{i}^{k}(\cdot)\,h\left(  t,\cdot\right)  \right)(z)\right\vert 
\leq\mathbf{c}t^{-Q/2}e^{-\left\vert z\right\vert^{2}/\mathbf{c}t}
\end{equation}
To prove (ii), with the change of variables $z=D\left(  \sqrt{t}\right)  w$
and using (i) and (\ref{Gaussian G}), we compute:
\begin{align}
\int_{\mathbb{R}^{n}}\left\vert G_{j}^{i}\left(  t,z\right)  \right\vert dz
&  =\int_{\mathbb{R}^{n}}
\left\vert G_{j}^{i}\left(  t,D\left(  \sqrt{t}\right)  w\right)  \right\vert t^{Q/2}dw
\nonumber\\    \label{2.9}
&  =\int_{\mathbb{R}^{n}}\left\vert G_{j}^{i}\left(  1,w\right)  \right\vert dw
\leq\int_{\mathbb{R}^{n}}\mathbf{c}e^{-\left\vert w\right\vert^{2}/\mathbf{c}}dw
\end{align}
which is a finite constant. (iii)\ holds because $G_{j}^{i}$ is, by
definition, a linear combination of derivatives of the kind
\[
X_{i_{2}}^{R}\left[  X_{i_{3}}^{R}...X_{i_{\omega_{k}}}^{R}\,
\left(  c_{i}^{k}(\cdot)\,h\left(  t,\cdot\right)  \right)  \right]  =
X_{i_{2}}^{R}H\left(t,\cdot\right),
\]
we know that $\left(  X_{i}^{R}\right)  ^{T}=-X_{i}^{R}$ (where $\left({}\right)^{T}$ 
denotes transposition) and therefore we may deduce that 
$\int_{\mathbb{R}^{n}}X_{i}^{R}f\left(  x\right)  dx=0$ 
for any $f$ for which this integral makes sense.
To prove (iv), it suffices to modify the computation (\ref{2.9}) as follows:
\[
\int_{\mathbb{R}^{n}\setminus B_{R}}\left\vert G_{j}^{i}\left(  t,z\right)
\right\vert dz\leq\int_{\left\vert w\right\vert >R/\sqrt{t}}\mathbf{c}%
e^{-\left\vert w\right\vert ^{2}/\mathbf{c}}dw<\varepsilon
\]
for $R$ large enough and $t\leq t_{0}$.
\end{proof}

The following result contains the main estimate on the commutator between the
derivative $X_{i}$ and the heat semigroup 
$X_{i}\left(  W_{t}f\right)-W_{t}\left(  X_{i}f\right)  $.

\begin{proposition}    \label{Prop XW-WX} 
For any $f\in BV\left(  \mathbb{G}\right)  $ and $i\in\{1,2,\ldots,q\}$, $t>0$, 
there exists an $L^{1}$ function $\mu_{t}^{i}$ on ${{\mathbb{R}}^{n}}$ such that
\[
X_{i}\left(  W_{t}f\right)  (x)=W_{t}\left(  X_{i}f\right)  (x)+\mu_{t}^{i}(x),
\]
where $W_{t}\left(  X_{i}f\right)  $ is the function defined by
\[
W_{t}\left(  X_{i}f\right)  (x):=\int_{{{\mathbb{R}}^{n}}}
h(t,y^{-1}\circ x)dX_{i}f(y).
\]
Moreover, for any $t>0,$
\begin{equation}   \label{estmu}
\Vert\mu_{t}^{i}\Vert_{L^{1}({{\mathbb{R}}^{n}})}\leq cq|D_{\mathbb{G}}f|({{\mathbb{R}}^{n}}),
\end{equation}
where $c$ is the constant in Lemma~\ref{lemmanuclei}, (ii).
\end{proposition}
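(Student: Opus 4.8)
The plan is to compute $X_i(W_t f)$ directly and isolate the difference from $W_t(X_i f)$ using the structure relation \eqref{Xi_XiR}, which expresses the left-invariant field $X_i$ in terms of right-invariant fields. Recall that $W_t f = f \ast h_t$ and that for a left-invariant operator $P$ we have $P(f\ast g)=f\ast(Pg)$. So I would first write $X_i(W_t f)(x) = (f \ast X_i h_t)(x)$, where $X_i$ acts on the $x$-variable of $h(t,x)$. The key algebraic identity \eqref{Xi_XiR} says
\[
X_i h(t,\cdot) = X_i^R h(t,\cdot) + \sum_{k=q+1}^n X_k^R\bigl(c_i^k(\cdot)\,h(t,\cdot)\bigr).
\]
The first term on the right will reproduce $W_t(X_i f)$ after integration by parts via the convolution identity \eqref{X (convoluz)}/\eqref{dX(convoluz)}, since $X_i^R$ and $X_i$ agree at the origin and $(X_i f)\ast g = f \ast (X_i^R g)$. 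The second term is exactly the expression whose kernel representation is analyzed in Lemma~\ref{lemmanuclei}, and it will become the commutator term $\mu_t^i$.

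Concretely, I would define
\[
\mu_t^i(x) = \int_{{{\mathbb{R}}^{n}}}\sum_{k=q+1}^n X_k^R\bigl(c_i^k(\cdot)\,h(t,\cdot)\bigr)(y^{-1}\circ x)\,f(y)\,dy,
\]
and then rewrite the inner sum using the expansion of $X_k^R$ in terms of $X_{i_2}^R\cdots X_{i_{\omega_k}}^R$ from \eqref{theta}, pulling out one factor $X_j^R$ so that it matches the definition \eqref{defgij} of the kernels $G_j^i$. This gives
\[
\mu_t^i(x) = \sum_{j=1}^q \int_{{{\mathbb{R}}^{n}}} X_j^R G_j^i(t,y^{-1}\circ x)\,f(y)\,dy
= \sum_{j=1}^q \int_{{{\mathbb{R}}^{n}}} G_j^i(t,y^{-1}\circ x)\,d(X_j f)(y),
\]
where the last equality is the integration-by-parts formula \eqref{dX(convoluz)} applied with $g = G_j^i(t,\cdot)$, which is licit because $G_j^i(t,\cdot)\in C^1$ and $f\in BV(\mathbb{G})$. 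The $L^1$ bound then follows by Fubini: using $\int |G_j^i(t,z)|\,dz \le c$ from Lemma~\ref{lemmanuclei}(ii) together with $|X_j f|(\mathbb{R}^n)\le |D_{\mathbb{G}}f|(\mathbb{R}^n)$ from \eqref{dX_i D_G}, I estimate
\[
\Vert \mu_t^i\Vert_{L^1} \le \sum_{j=1}^q \Bigl(\sup_t\!\int|G_j^i(t,z)|\,dz\Bigr)|X_j f|(\mathbb{R}^n)\le c\,q\,|D_{\mathbb{G}}f|(\mathbb{R}^n),
\]
which is \eqref{estmu}.

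The main obstacle I expect is the justification of the integration-by-parts step \eqref{dX(convoluz)} at the level of measures, i.e.\ moving the right-invariant derivative $X_j^R$ off the kernel $G_j^i(t,y^{-1}\circ x)$ and onto the $BV$ datum $d(X_j f)$. This requires that $G_j^i(t,\cdot)$ be an admissible test function (it is $C^\infty$ away from $0$ and, by the Gaussian bound \eqref{Gaussian G}, integrable together with its derivatives), and that the approximation argument used to prove \eqref{dX(convoluz)} applies here uniformly enough to pass the derivative. A secondary technical point is verifying that all the convolution and Fubini manipulations converge absolutely, which is guaranteed by the Gaussian decay in Lemma~\ref{lemmanuclei}; and one should record that the zero-mean property (iii) of $G_j^i$, while not strictly needed for the $L^1$ estimate \eqref{estmu}, reflects the fact that $\mu_t^i$ is genuinely a commutator and will be what makes the short-time analysis work in the proof of Theorem~\ref{Main Theorem 1}.
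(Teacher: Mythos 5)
Your proposal follows essentially the same route as the paper's proof: the same decomposition of $X_i h(t,\cdot)$ via \eqref{Xi_XiR}, the same use of \eqref{theta} to peel off one right-invariant derivative and identify the kernels $G_j^i$ of Lemma~\ref{lemmanuclei}, the same passage of that derivative onto $d(X_jf)$ via \eqref{dX(convoluz)}, and the same Fubini estimate combining Lemma~\ref{lemmanuclei}(ii) with \eqref{dX_i D_G}. The argument is correct as written.
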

\begin{proof}
Let $f\in BV\left(  \mathbb{G}\right)  $ and fix $i\in\{1,2,...,q\}$. By
(\ref{Xi_XiR}) and (\ref{dX(convoluz)}) we have
\begin{align}  \label{nuclei G} 
X_{i}W_{t}f\left(  x\right) = &
\int_{\mathbb{R}^{n}}X_{i}h\left(t,y^{-1}\circ x\right)  f\left(  y\right)  dy
\\      
= & \int_{\mathbb{R}^{n}}X_{i}^{R}h\left(  t,y^{-1}\circ x\right)f\left(y\right)dy
+\int_{\mathbb{R}^{n}}\sum_{k=q+1}^{n}X_{k}^{R}\,\left(  c_{i}^{k}(\cdot)\,
h\left(  t,\cdot\right)  \right)  \left(  y^{-1}\circ x\right)f\left(  y\right)  dy
\nonumber\\
= & \int_{\mathbb{R}^{n}}X_{i}^{R}h\left(  t,y^{-1}\circ x\right)  f\left(y\right)  dy
\nonumber\\  
&  +\int_{\mathbb{R}^{n}}\sum_{k=q+1}^{n}\sum_{1\leq i_{j}\leq q}
\vartheta_{i_{1}i_{2}...i_{\omega_{k}}}^{k}X_{i_{1}}^{R}X_{i_{2}}^{R}...X_{i_{\omega_{k}}}^{R}
\,\left(  c_{i}^{k}(\cdot)\,h\left(t,\cdot\right)  \right)  \left(  y^{-1}\circ x\right)  
f\left(  y\right) dy
\nonumber\\
= & \int_{\mathbb{R}^{n}}h\left(  t,y^{-1}\circ x\right)  d\left(X_{i}f\right)  \left(  y\right)  
\nonumber \\
&  +\sum_{k=q+1}^{n}\sum_{1\leq i_{j}\leq q}\vartheta_{i_{1}i_{2}
...i_{\omega_{k}}}^{k}\int_{\mathbb{R}^{n}}X_{i_{2}}^{R}...X_{i_{\omega_{k}}}^{R}\,
\left(  c_{i}^{k}(\cdot)\,h\left(  t,\cdot\right)  \right)  \left(y^{-1}\circ x\right)  
d\left(  X_{i_{1}}f\right)  \left(  y\right) 
\nonumber\\
= &  W_{t}\left(  X_{i}f\right)  \left(  x\right)  +\sum_{j=1}^{q}
\int_{\mathbb{R}^{n}}G_{j}^{i}\left(  t,y^{-1}\circ x\right)  
d\left(X_{j}f\right)  \left(  y\right)  ,\nonumber
\end{align}
where the kernels $G_{j}^{i}$ are defined in Lemma~\ref{lemmanuclei}. Set
\begin{equation}
\mu_{t}^{i}\left(  x\right)  =\sum_{j=1}^{q}\int_{\mathbb{R}^{n}}G_{j}%
^{i}\left(  t,y^{-1}\circ x\right)  d\left(  X_{j}f\right)  (y), \label{mu_t}%
\end{equation}
then by Lemma~\ref{lemmanuclei} and (\ref{dX_i D_G}) we have
\[
\left\Vert \mu_{t}^{i}\right\Vert _{L^{1}\left(  \mathbb{R}^{n}\right)  }\leq
c\sum_{j=1}^{q}\int_{\mathbb{R}^{n}}\left\vert d\left(  X_{j}f\right)\right\vert (y)
\leq cq\left\vert D_{\mathbb{G}}f\right\vert \left(\mathbb{R}^{n}\right)
\]
that is (\ref{estmu}). By (\ref{nuclei G}) and (\ref{mu_t}), we can write:
\begin{equation}    \label{limsup}
X_{i}\left(  W_{t}f\right)  (x)=W_{t}\left(  X_{i}f\right)  (x)+\mu_{t}%
^{i}(x).
\end{equation}
\end{proof}

\begin{proof} [Proof of Theorem~\ref{Main Theorem 1}]
Let us conclude with the proof of \eqref{limsupbv}. By Proposition~\ref{Prop XW-WX} 
we have (with the supremum taken, accordingly to Definition~\ref{defBV}, 
on all the functions $g\in C_{0}^{1}(\mathbb{R}^{n},{\mathbb{R}}^{q})$ 
such that $\Vert g\Vert_{\infty}\leq1$):
\begin{align*}
& \int_{\mathbb{R}^{n}}|\nabla_{X}W_{t}f(x)|dx
=\sup_{g}\left\{\int_{\mathbb{R}^{n}}\sum_{i=1}^{q}g_{i}(x)X_{i}(W_{t}f)(x)dx\right\} 
\\
& =\sup_{g}\left\{  \int_{\mathbb{R}^{n}}\sum_{i=1}^{q}g_{i}\left(  x\right)
\left(  \int_{\mathbb{R}^{n}}h(t,y^{-1}\circ x)dX_{i}f(y)\right)dx
+\int_{\mathbb{R}^{n}}\sum_{i=1}^{q}g_{i}(x)\mu_{t}^{i}(x)dx\right\}
\end{align*}
since $h(t,z^{-1})=h(t,z)$
\begin{align*}
&  =\sup_{g}\left\{  \sum_{i=1}^{q}\int_{\mathbb{R}^{n}}\left(  \int_{\mathbb{R}^{n}}
h(t,x^{-1}\circ y)g_{i}\left(  x\right)  dx\right)dX_{i}f(y)
+\int_{\mathbb{R}^{n}}\sum_{i=1}^{q}g_{i}(x)\mu_{t}^{i}(x)dx\right\}
\\
&  =\sup_{g}\left\{  \sum_{i=1}^{q}\int_{\mathbb{R}^{n}}W_{t}g_{i}(y)dX_{i}f(y)
+\int_{\mathbb{R}^{n}}\sum_{i=1}^{q}g_{i}(x)\mu_{t}^{i}(x)dx\right\} 
\\
&  =\sup_{g}\left\{ -\sum_{i=1}^{q}\int_{\mathbb{R}^{n}}X_{i}(W_{t}g_{i})(y)f(y)dy
+\int_{\mathbb{R}^{n}}\sum_{i=1}^{q}g_{i}(x)\mu_{t}^{i}(x)dx\right\}
\end{align*}
where in the last identity we have used (\ref{byparts_BV}). We now exploit the
fact that the supremum on all compactly supported functions $g$ in the above expression can be replaced by 
the supremum on functions $\phi$ rapidly decreasing at infinity (as $W_{t}g$ is, by the
Gaussian estimates) verifying the constraint $\Vert\phi\Vert_{\infty}\leq1$; 
therefore the last expression is, by (\ref{estmu})
\[
\leq|D_{\mathbb{G}}f|+\sum_{i=1}^q\Vert\mu_{t}^{i}\Vert_{L^{1}(\mathbb{R}^{n})}
\leq(1+cq^2)\left\vert D_{\mathbb{G}}f\right\vert (\mathbb{R}^{n}).
\]
We have therefore proved that for any $t>0$, 
\[
\int_{\mathbb{R}^{n}}|\nabla_{X}W_{t}f(x)|dx\leq
(1+cq^2)\left\vert D_{\mathbb{G}}f\right\vert (\mathbb{R}^{n}).
\]
Passing to the limsup as $t\rightarrow0$ we are done.
\end{proof}

\section{Proof of Theorem~\ref{Main Theorem 2}}    \label{proof2}

Theorem~\ref{Main Theorem 2}, thanks to coarea formula, follows from Theorem~\ref{diffusionperimeters}, 
so we first prove the latter.

As a preliminary result, we present a characterization of $BV({\mathbb{G}})$
functions analogous to that in \cite[Proposition 2.3]{Cap97Reg}.

\begin{lemma}
\label{lemmaCap} 
If $f\in L^{1}({{\mathbb{R}}^{n}})$, $z\in{{\mathbb{R}}^{n}}$
and 
\begin{equation}\label{Zincrement}
\liminf_{t\to 0}\frac{1}{t} \int_{{\mathbb{R}}^{n}}|f(x\circ D(t)z)-f(x)|dx <\infty ,
\end{equation}
then the distributional derivative of $f$ along $Z=\sum_{j=1}^{q}z_{j}X_{j}$ is a finite measure.
\end{lemma}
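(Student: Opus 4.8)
The statement asserts that a one-directional control on the $L^1$ increment in the direction $Z$ implies that the distributional $Z$-derivative is a finite measure. The natural strategy is to exploit the hypothesis \eqref{Zincrement} to construct a bounded linear functional on test functions, and then invoke the Riesz representation theorem to produce the measure. First I would fix a test function $g\in C_0^1(\mathbb{R}^n)$ and examine the difference quotient along the dilated direction $D(t)z$; the key is to relate
\[
\frac{1}{t}\int_{\mathbb{R}^n}\bigl(f(x\circ D(t)z)-f(x)\bigr)g(x)\,dx
\]
to the action of the derivative $Zg$ as $t\to0$. By changing variables $x\mapsto x\circ D(t)z$ in the first summand (the Jacobian of left translation is $1$ since translations preserve Lebesgue measure), this quotient equals
\[
\frac{1}{t}\int_{\mathbb{R}^n}f(x)\bigl(g(x\circ D(t)(-z))-g(x)\bigr)\,dx .
\]

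\textbf{Key steps.} The second step is to identify the pointwise limit of the inner difference quotient. Since $Z=\sum_{j=1}^q z_j X_j$ is a horizontal left-invariant vector field, the curve $t\mapsto x\circ D(t)(-z)$ is (up to reparametrization issues coming from the anisotropic dilation on the higher layers) tangent to $-Z$ at $t=0$ on the first $q$ coordinates, where the dilation acts linearly. I would verify that
\[
\lim_{t\to0}\frac{g(x\circ D(t)(-z))-g(x)}{t}=-Zg(x)
\]
uniformly on compact sets, using that $g\in C_0^1$ and that $D(t)(-z)$ has first $q$ components equal to $-tz$ while the higher components are $O(t^{\omega_i})=o(t)$ for $\omega_i>1$, so only the horizontal part contributes to the first-order term. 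Combined with the uniform bound coming from \eqref{Zincrement}, dominated convergence then yields
\[
\Bigl|\int_{\mathbb{R}^n}f(x)\,Zg(x)\,dx\Bigr|
=\lim_{t\to0}\Bigl|\frac{1}{t}\int_{\mathbb{R}^n}f(x)\bigl(g(x\circ D(t)(-z))-g(x)\bigr)\,dx\Bigr|
\leq C\,\|g\|_\infty ,
\]
where $C$ is the finite liminf in \eqref{Zincrement}. Thus $g\mapsto\int f\,Zg\,dx$ extends to a bounded linear functional on $C_0(\mathbb{R}^n)$ with norm at most $C$.

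\textbf{Main obstacle.} The delicate point is the exchange of limit and integral leading to the uniform bound: the hypothesis only gives a \emph{liminf}, so I would pass to a sequence $t_k\to0$ realizing the liminf and argue along that sequence, which suffices since the limiting functional $\int f\,Zg\,dx$ does not depend on $t$. A second technical care is justifying the passage of the derivative through the integral against $f\in L^1$: the difference quotient of $g$ is uniformly bounded (by $\|Zg\|_\infty$ plus an $o(1)$ error, via the mean value theorem along the integral curve and the structure of $D(t)$), so $|f(x)|\cdot|\text{difference quotient}|$ is dominated by an $L^1$ function independent of $t$, licensing dominated convergence. Once the functional is shown bounded, the Riesz representation theorem produces a finite (signed, hence via Jordan decomposition finite) Radon measure $\mu_Z$ with $\int f\,Zg\,dx=-\int g\,d\mu_Z$ for all $g\in C_0^1(\mathbb{R}^n)$, which is exactly the assertion that the distributional $Z$-derivative of $f$ is a finite measure. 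I expect the most subtle estimate to be controlling the higher-layer contribution of $D(t)(-z)$ and confirming it is genuinely $o(t)$, so that the first-order behaviour is governed solely by the horizontal directions defining $Z$.
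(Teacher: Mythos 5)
Your proposal is correct and follows essentially the same route as the paper: transfer the difference quotient onto the test function by right translation (using right invariance of Lebesgue measure), identify its limit with $-Zg$ because the higher-layer components of $D(t)z$ are $o(t)$, bound the resulting functional by the liminf in \eqref{Zincrement} along a realizing sequence, and conclude by Riesz representation. The paper handles the "only the horizontal part contributes" step by quoting the identity $Z\phi(x)=\frac{d}{dt}[\phi(x\circ tz^{*})]_{/t=0}$ with $z^{*}=(z_1,\dots,z_q,0,\dots,0)$, which is exactly the verification you sketch.
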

\begin{proof}
The proof is essentially the same contained in \cite{Cap97Reg}, but we repeat
it for the reader's convenience.  We start by recalling the following identity 
(see \cite[p. 17]{BLUbook}): if we take $\phi\in C_{c}^{1}({{\mathbb{R}}^{n}})$ and $Z$
is any vector field which at the origin equals $z$, then
\[
Z\phi\left(  x\right)  =\frac{d}{dt}\left[  \phi\left(  x\circ tz\right)\right]_{/t=0}.
\]
Let us apply this identity to the vector field $Z=\sum_{i=1}^{q}z_{i}X_{i}$
which at the origin takes the value
\[
z^{\ast}=\left(  z_{1},z_{2},...,z_{q},0,0,...,0\right)  .
\]
For any $z\in\mathbb{R}^{n},$
\[
\frac{d}{dt}\left[\phi\left(x\circ D(t)z\right)\right]_{/t=0}
=\frac{d}{dt}\left[  \phi\left(  x\circ tz^{\ast}\right)  \right]_{/t=0}
\]
hence
\[
\frac{d}{dt}\left[\phi\left(x\circ D(t)z\right)\right]_{/t=0}
=\sum_{i=1}^{q}z_{i}\left(  X_{i}\phi\right)  \left(  x\right)  .
\]
Since 
\begin{align*}
&\left|\int_{{\mathbb{R}}^{n}}f(x)\frac{\phi(x\circ D(t)z^{-1})-\phi(x)}{t}dx\right|
=\left|\int_{{\mathbb{R}}^{n}}\frac{f(x\circ D(t)z)-f(x)}{t}\phi(x)dx\right|
\\
&\leq \|\phi\|_\infty \frac{1}{|t|} \int_{{\mathbb{R}}^{n}}|f(x\circ D(t)z)-f(x)|dx ,
\end{align*}
from \eqref{Zincrement} we deduce that the functional
\[
T_{f}\phi: 
= \lim_{t\rightarrow0}\int_{{\mathbb{R}}^{n}}f(x)\frac{\phi(x\circ D(t)z^{-1})-\phi(x)}{t}dx
=-\int_{{\mathbb{R}}^{n}}f(x)Z\phi(x)dx
\]
is well defined and satisfies the condition $|T_{f}\phi|\leq C\Vert\phi\Vert_{\infty}$. 
Then, there exists a measure $\mu_{Z}$ such that
\[
T_{f}\phi=\int_{{\mathbb{R}}^{n}}\phi(x)d\mu_{Z}(x),\qquad\forall\phi\in C_{c}^{1}({{\mathbb{R}}^{n}}).
\]
By the density of $C_{c}^{1}({{\mathbb{R}}^{n}})$ in $C_{c}({{\mathbb{R}}^{n}})$, we get the conclusion.
\end{proof}

\begin{proof}  [Proof of Theorem~\ref{diffusionperimeters}]
Let us introduce the functions
\begin{align*}
g(t) &  =\int_{E^{c}}W_{t}\chi_{E}(x)dx
\\
F(t) &  =g\left(  t^{2}\right)  =\int_{E^{c}}W_{t^{2}}\chi_{E}(x)dx.
\end{align*}
The statement we want to prove is: 
\[
\lim_{t\rightarrow0}\frac{g\left(  t\right)  }{\sqrt{t}}=2\theta_\infty
\int_{\partial_{\mathbb{G}}^{\ast}E}\phi_{\mathbb{G}}(\nu_{E})d{\mathcal{S}}_{\infty}^{Q-1}.
\]
First, notice that $g(t)\to\int_{E^{c}}\chi_{E}(x)dx=0$ as $t\downarrow 0$. Indeed, 
\begin{equation}  \label{decomp}
W_{t}\chi _{E}-\chi _{E}=-\left( W_{t}\chi _{E^{c}}-\chi _{E^{c}}\right) ,
\end{equation} 
since 
\[
\chi _{E}+\chi _{E^{c}}=1=W_{t}1=W_{t}\chi _{E}+W_{t}\chi _{E^{c}}
\]
and, since by the finiteness of the 
perimeter of $E$ either $E$ or $E^c$ has finite measure, 
$\|W_t\chi_E-\chi_E\|_{L^1({\mathbb R}^n)}\to 0$ as $t\downarrow 0$.
By De L'Hospital rule 
we can evaluate $\lim_{t\rightarrow 0}g\left(  t\right)  /\sqrt{t}$ as
\[
\lim_{t\rightarrow0}2\sqrt{t}g^{\prime}\left(  t\right)  
=\lim_{t\rightarrow 0}2tg^{\prime}\left(  t^{2}\right)  
=\lim_{t\rightarrow0}F^{\prime}\left( t\right)  .
\]
Computing the derivative of $F$ using the divergence theorem
(\ref{div thm boundary}) and the properties of $h(t,\cdot)$, we get:
\begin{align*}
F^{\prime}(t) &  = 2t\int_{E^{c}}LW_{t^{2}}\chi_{E}dx
=2t\int_{E^{c}}\mathrm{div}_{\mathbb{G}}\,\nabla_{X}W_{t^{2}}\chi_{E}dx
\\
& = 2\theta_{\infty}t\int_{\partial_{\mathbb{G}}^{\ast}E}
\langle\nabla_{X}W_{t^{2}}\chi_{E},\nu_{E}\rangle d{\mathcal{S}}_{\infty}^{Q-1}
\\
& =2\theta_{\infty}t\int_{\partial_{\mathbb{G}}^{\ast}E}
\langle\nabla_{X}\left(  \int_{E}h(t^{2},y^{-1}\circ x)dy\right),\nu_{E}(x)\rangle
d{\mathcal{S}}_{\infty}^{Q-1}(x).
\end{align*}
Now, taking into account the fact that $h\left(t,z^{-1}\right) =h\left(t,z\right)$, 
we have 
\begin{align*}
\int_{E}h(t^{2},y^{-1}\circ x)dy &  
=\int_{E}h(t^{2},x^{-1}\circ y)dy
=\int_{E}t^{-Q}h\left( 1,D\left(\frac{1}{t}\right)\left(x^{-1}\circ y\right)\right)dy
\\
& =\int_{E}t^{-Q}h\left(1,D\left(\frac{1}{t}\right)\left(x^{-1}\right)
\circ D\left(\frac{1}{t}\right)\left(y\right)  \right)dy 
\\
& =\int_{D\left(\frac{1}{t}\right)E}h\left(1,D\left(\frac{1}{t}\right)\left(x^{-1}\right)  
\circ z\right)  dz
\\
&  =\int_{D\left(  \frac{1}{t}\right)  E}
h\left(  1,z^{-1}\circ D\left(\frac{1}{t}\right)  \left(  x\right)  \right)  dz
\end{align*}
hence
\begin{align*}
F^{\prime}(t) &  =
2\theta_{\infty}t\int_{\partial_{\mathbb{G}}^{\ast}E}\!\!\left\langle 
\frac{1}{t}\int_{D\left(  \frac{1}{t}\right)  E}\nabla_{X}
h\left(  1,z^{-1}\circ D\left(  \frac{1}{t}\right)  \left(  x\right)\right)  dz,
\nu_{E}(x)\right\rangle d{\mathcal{S}}_{\infty}^{Q-1}(x)
\\
&  =2\theta_{\infty}\int_{\partial_{\mathbb{G}}^{\ast}E}
\left\langle\int_{D\left(  \frac{1}{t}\right)  E}\nabla_{X}
h(1,z^{-1}\circ D\left(\frac{1}{t}\right)  \left(  x\right)  dz,
\nu_{E}(x)\right\rangle d{\mathcal{S}}_{\infty}^{Q-1}(x)
\\
&  =2\theta_{\infty}\int_{\partial_{\mathbb{G}}^{\ast}E}
\left\langle \int_{D(1/t)(x^{-1}E)}\nabla_{X}h(1,z^{-1})dz,\nu_{E}(x)\right\rangle
d{\mathcal{S}}_{\infty}^{Q-1}(x)
\\
&  =2\theta_{\infty}\sum_{i=1}^{q}\int_{\partial_{\mathbb{G}}^{\ast}E}\nu_{E}(x)_{i}
\left(  \int_{D(1/t)(x^{-1}E)}X_{i}h(1,z^{-1})dz\right)d{\mathcal{S}}_{\infty}^{Q-1}(x).
\end{align*}
By Theorem~\ref{rectifiability} (blow-up of the reduced boundary), we have the
$L^1_{\rm loc}$ convergence
\begin{equation}\label{blow up}
E_{1/t,x^{-1}}=D(1/t)(x^{-1}E)\rightarrow 
S_{\mathbb{G}}^{+}(\nu_{E}(x)).
\end{equation}
Since by the Gaussian estimates $X_{i}h(1,z^{-1})\in L^{1}\cap C^{\infty}(\mathbb{R}^{n})$,  
the integral on $E_{1/t,x^{-1}}$ converges to the integral on $S_{\mathbb{G}}^{+}(\nu_{E}(x))$
(see Remark~\ref{L1_loc L1}), is a continuous function of $x$ and we can apply the dominated 
convergence theorem, getting 
\begin{align*}
\lim_{t\rightarrow0}F^{\prime}\left(  t\right)   &  =2\theta_{\infty}
\int_{\partial_{\mathbb{G}}^{\ast}E}\sum_{i=1}^{q}\nu_{E}(x)_{i}
\left(\int_{S_{\mathbb{G}}^{+}(\nu_{E}(x))}X_{i}h(1,z^{-1})dz\right)  d{\mathcal{S}}_{\infty}^{Q-1}(x)
\\
&  =2\theta_{\infty}\int_{\partial_{\mathbb{G}}^{\ast}E}\sum_{i=1}^{q}
\nu_{E}(x)_{i}\left(  \int_{S_{\mathbb{G}}^{-}(\nu_{E}(x))}X_{i}h(1,z)dz\right)
d{\mathcal{S}}_{\infty}^{Q-1}(x)
\end{align*}
with the obvious meaning of the symbol $S_{\mathbb{G}}^{-}(\nu_{E}(x)).$ Next,
we perform a standard integration by parts in the inner integral, exploiting
the fact that the (Euclidean) outer normal to the halfspace $S_{\mathbb{G}}^{-}(\nu_{E}(x))$ is
\[
n_{E}(x)=\left(  \nu_{E}(x)_{1},...,\nu_{E}(x)_{q},0,...,0\right)
\]
while
\[
X_{i}h(1,z)=\partial_{z_{i}}h\left(  1,z\right)  +\sum_{j=q+1}^{n}
\partial_{z_{j}}\left(  q_{j}^{i}\left(  z\right)  h\left(  1,z\right)\right)
\]
hence 
\[
\sum_{i=1}^{q}
\nu_{E}(x)_{i}\left(\int_{S_{\mathbb{G}}^{-}(\nu_{E}(x))}X_{i}h(1,z)dz\right)  
= \sum_{i=1}^{q}\left(  \nu_{E}(x)_{i}\right)  ^{2}\int_{T_{\mathbb{G}}
(\nu_{E}(x))}h(1,z)dz=\phi_{\mathbb{G}}(\nu_{E})
\]
for any $x\in\partial_{\mathbb{G}}^{\ast}E,$ and we conclude that 
\begin{equation}   \label{mainequality2}
\lim_{t\rightarrow0}\frac{1}{2\theta_{\infty}\sqrt{t}}\int_{E^{c}}W_{t}\chi_{E}(x)dx=
\int_{\partial_{\mathbb{G}}^{\ast}E}\phi_{\mathbb{G}}(\nu_{E})d{\mathcal{S}}_{\infty}^{Q-1}.
\end{equation}
In order to prove the converse, assume that $E$ has finite measure and notice that for any $z$ 
\[
\int_{{\mathbb{R}}^{n}}\chi_{E}(x\circ D(\sqrt{t})z)(1-\chi_{E}(x))dx=
\frac{1}{2}\int_{\mathbb{R}^{n}}\left\vert \chi_{E}(x\circ D(\sqrt{t})z)-\chi_{E}(x)\right\vert dx,
\]
and that the Lebesgue measure is right invariant as well. Therefore
\begin{align*}
\int_{E^{c}} W_{t}\chi_{E}(x)\ dx&=
t^{-Q/2}\int_{{\mathbb{R}}^{n}}\int_{E}h\left(  1,D(1/\sqrt{t})(y^{-1}\circ x)\right)  
\left(  1-\chi_{E}(x)\right)  \ dydx
\\
& =\int_{{\mathbb{R}}^{n}}\int_{{\mathbb{R}}^{n}}
h(1,z)\chi_{E}(x\circ D(\sqrt{t})z^{-1})\left(  1-\chi_{E}(x)\right)  \ dzdx
\\
& =\frac{1}{2}\int_{{\mathbb{R}}^{n}}h(1,w^{-1})\int_{{\mathbb{R}}^{n}}
|\chi_{E}(x\circ D(\sqrt{t})w)-\chi_{E}(x)|\ dxdw
\\
& =\frac{1}{2}\int_{{\mathbb{R}}^{n}}h(1,w)\int_{{\mathbb{R}}^{n}}
|\chi_{E}(x\circ D(\sqrt{t})w)-\chi_{E}(x)|\ dxdw.
\end{align*}
Then, the finiteness of the liminf of the integral in the left hand side of 
(\ref{mainequality2}) implies that there is a sequence $t_k\downarrow 0$ such that
\[
\frac{1}{2}\int_{{\mathbb{R}}^{n}}h(1,w)\int_{{\mathbb{R}}^{n}}
|\chi_{E}(x\circ D(\sqrt{t_k})w)-\chi_{E}(x)|\ dxdw\leq C\sqrt{t_k}
\]
for all $k$. On the other hand, by the lower Gaussian estimates on $h\left(1,w\right)$ 
we can also write
\begin{align*}
&  \frac{1}{2}\int_{{\mathbb{R}}^{n}}h(1,w)
\int_{{\mathbb{R}}^{n}}|\chi_{E}(x\circ D(\sqrt{t_k})w)-\chi_{E}(x)|\ dxdw
\\
&  \geq\frac{1}{2}\int_{\left\vert w\right\vert \leq2}h(1,w)
\int_{{\mathbb{R}}^{n}}|\chi_{E}(x\circ D(\sqrt{t_k})w)-\chi_{E}(x)|\ dxdw
\\
&  \geq c\int_{\left\vert w\right\vert \leq2}
\int_{{\mathbb{R}}^{n}}|\chi_{E}(x\circ D(\sqrt{t_k})w)-\chi_{E}(x)|\ dxdw
\end{align*}
hence
\[
\int_{\left\vert w\right\vert \leq2}
\int_{{\mathbb{R}}^{n}}|\chi_{E}(x\circ D(\sqrt{t_k})w)-\chi_{E}(x)|\ dxdw\leq C\sqrt{t_k}.
\]
So, if we define the function 
\[
\Phi_{t_k}(w)=
\int_{{\mathbb{R}}^{n}}\frac{|\chi_{E}(x\circ D(\sqrt{t_k})w)-\chi_{E}(x)|}{\sqrt{t_k}}\ dx,
\]
by Fatou's Lemma we deduce that
\[
\Phi_0(w)=\liminf_{k\to \infty} \Phi_{t_k}(w)
\]
is integrable on $B=\{|w|\leq 2\}$. So, for any $\varepsilon>0$, there are a set $B_\varepsilon$
of measure less than $\varepsilon$ and a constant $C_\varepsilon>0$ such that for every
$w\in B\setminus B_\varepsilon$, $\Phi_0(w)\leq C_\varepsilon$. In particular, we 
may choose $n$ linearly independent directions $w_1,\ldots,w_n$ with $\Phi_0(w_i)\leq C_\varepsilon$,
and there is $k_0\in{\mathbb N}$ such that
\[
\int_{{\mathbb{R}}^{n}}
|\chi_{E}(x\circ D(\sqrt{t_k})w_i)-\chi_{E}(x)|\ dx\leq (C_\varepsilon+1)\sqrt{t_k},
\qquad \forall\ i=1,\ldots,n,\ k>k_0.
\]
Then by Lemma~\ref{lemmaCap} we can conclude that $E$ has finite perimeter.
\end{proof}

We note that the above proof of Theorem~\ref{diffusionperimeters} simplifies
that of the analogous result in the Euclidean setting given in
\cite{MirPalParPre07Sho}.

With an argument similar to that used in the proof of Proposition 8 in
\cite{Pre04}, it is also possible to prove the following

\begin{proposition}
If $E$ has finite perimeter, then
\begin{equation}    \label{marcestimate}
\frac{1}{4\sqrt{t}}\int_{\mathbb{R}^{n}}\left\vert W_{t}\chi_{E}-\chi_{E}\right\vert dx
\leq c_{\mathbb{G}}P_{\mathbb{G}}\left(  E\right)  ,\text{\ \ }t>0 
\end{equation}
with
\[
c_{\mathbb{G}}=\int_{\mathbb{R}^{n}}\left\vert \nabla_{X}h\left(  1,z\right)
\right\vert dz.
\]
\end{proposition}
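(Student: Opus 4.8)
The plan is to reduce the statement to the heat-content quantity $g(t)=\int_{E^{c}}W_{t}\chi_{E}\,dx$ already analysed in the proof of Theorem~\ref{diffusionperimeters}, and then to integrate a \emph{uniform} bound on its derivative. First I would record the elementary identity $\int_{{\mathbb R}^{n}}|W_{t}\chi_{E}-\chi_{E}|\,dx=2g(t)$. Indeed, since $0\le W_{t}\chi_{E}\le1$ and $W_{t}1=1$, one has $|W_{t}\chi_{E}-\chi_{E}|=W_{t}\chi_{E^{c}}$ on $E$ and $=W_{t}\chi_{E}$ on $E^{c}$; moreover Fubini together with the symmetry $h(t,w^{-1})=h(t,w)$ (property (v) in Theorem~\ref{heatkernel}) gives $\int_{E}W_{t}\chi_{E^{c}}\,dx=\int_{E^{c}}W_{t}\chi_{E}\,dx$, which is exactly the self-adjointness of $W_{t}$ applied to $\chi_{E}$ and $\chi_{E^{c}}$. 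Consequently it suffices to prove $g(t)\le 2c_{\mathbb{G}}\sqrt{t}\,P_{\mathbb{G}}(E)$, since dividing the displayed identity by $4\sqrt{t}$ then yields \eqref{marcestimate}.

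Next I would work with $F(t)=g(t^{2})$, exactly as in the proof of Theorem~\ref{diffusionperimeters}. That function is continuous on $[0,\infty)$ with $F(0)=0$ (because, $E$ having finite perimeter, either $E$ or $E^{c}$ has finite measure, so $W_{t}\chi_{E}\to\chi_{E}$ in $L^{1}$ and hence $g(t)\to0$), and it is of class $C^{1}$ on $(0,\infty)$. The cited proof already computes, via the heat equation, the divergence theorem \eqref{div thm boundary} and the scaling of $h$, the expression
\begin{equation*}
F'(t)=2\theta_{\infty}\sum_{i=1}^{q}\int_{\partial_{\mathbb{G}}^{\ast}E}\nu_{E}(x)_{i}\left(\int_{D(1/t)(x^{-1}E)}X_{i}h(1,z^{-1})\,dz\right)d{\mathcal S}_{\infty}^{Q-1}(x).
\end{equation*}
The key new observation is that this derivative is bounded by a constant independent of $t$. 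Indeed, pointwise in $x$ the Cauchy--Schwarz inequality and $|\nu_{E}(x)|=1$ give $\bigl|\sum_{i}\nu_{E}(x)_{i}\!\int_{D(1/t)(x^{-1}E)}X_{i}h(1,z^{-1})\,dz\bigr|\le\bigl|\int_{D(1/t)(x^{-1}E)}\nabla_{X}h(1,z^{-1})\,dz\bigr|$, and enlarging the domain to all of ${\mathbb R}^{n}$ together with the fact that the substitution $z\mapsto z^{-1}=-z$ has unit Jacobian yields the uniform bound $\int_{{\mathbb R}^{n}}|\nabla_{X}h(1,z^{-1})|\,dz=\int_{{\mathbb R}^{n}}|\nabla_{X}h(1,z)|\,dz=c_{\mathbb{G}}$. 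Recalling \eqref{perim S_infty}, i.e.\ $P_{\mathbb{G}}(E)=\theta_{\infty}{\mathcal S}_{\infty}^{Q-1}(\partial_{\mathbb{G}}^{\ast}E)$, I conclude that $|F'(t)|\le2c_{\mathbb{G}}P_{\mathbb{G}}(E)$ for every $t>0$.

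To finish, since $F$ is continuous at $0$ with $F(0)=0$ and has bounded derivative on $(0,\infty)$, it is Lipschitz, so $F(t)=\int_{0}^{t}F'(\sigma)\,d\sigma$ and therefore $F(t)\le2c_{\mathbb{G}}P_{\mathbb{G}}(E)\,t$. Writing $t$ in place of $t^{2}$ gives $g(t)=F(\sqrt{t})\le2c_{\mathbb{G}}\sqrt{t}\,P_{\mathbb{G}}(E)$, which combined with the reduction of the first paragraph proves \eqref{marcestimate}.

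The step I expect to be the main obstacle is the rigorous justification of the formula for $F'$: differentiating $W_{t^{2}}\chi_{E}$ in $t$, exchanging $\nabla_{X}$ with the integral, and reducing the resulting surface integral to the reduced boundary. This is, however, precisely the computation already carried out in the proof of Theorem~\ref{diffusionperimeters}, whose validity rests on the Gaussian estimates \eqref{BLU 1}--\eqref{BLU 2} and on the rectifiability of $\partial_{\mathbb{G}}^{\ast}E$ (hence, as there, on the Step~$2$ hypothesis); that justification transfers here without change. The genuinely new ingredient is only the uniform-in-$t$ boundedness of $F'$, which upgrades the limit identity established in the theorem to an estimate holding for all $t>0$.
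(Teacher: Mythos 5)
Your proof is correct and follows essentially the same route as the paper's: the reduction $\int_{\mathbb{R}^{n}}|W_{t}\chi_{E}-\chi_{E}|\,dx=2\int_{E^{c}}W_{t}\chi_{E}\,dx$, the divergence-theorem representation of the time derivative of the heat content as a boundary integral of $\langle\nabla_{X}W_{s}\chi_{E},\nu_{E}\rangle$, and the scaling bound coming from $\int_{\mathbb{R}^{n}}|\nabla_{X}h(s,z)|\,dz=c_{\mathbb{G}}/\sqrt{s}$. The only difference is cosmetic: you integrate in the variable $\sigma=\sqrt{s}$, turning the paper's bound $\int_{E^{c}}LW_{s}\chi_{E}\,dx\leq c_{\mathbb{G}}P_{\mathbb{G}}(E)s^{-1/2}$ into the uniform estimate $|F'(\sigma)|\leq 2c_{\mathbb{G}}P_{\mathbb{G}}(E)$, whereas the paper integrates the $s^{-1/2}$ singularity directly over $(0,t)$.
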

\begin{proof}
First of all we note that
\[
\int_{\mathbb{R}^{n}}\left( W_{t}\chi _{E}\left( x\right) -\chi _{E}\left(
x\right) \right) dx=0.
\]
This follows by (iv) in Theorem 1 if $E$ has finite measure; otherwise $E^{c}
$ must have finite measure, and the same identity holds in view of (\ref{decomp}).
Recalling that $0\leq W_{t}\chi_{E}(x)\leq1,$ we deduce that
\begin{align}   \label{42}
\int_{E^{c}}W_{t}\chi_{E}\left(  x\right)  dx & =
\int_{E^{c}}\left(  W_{t}\chi_{E}\left(  x\right)  -\chi_{E}\left(  x\right)  \right)  dx  
=\int_{\mathbb{R}^{n}}\left(  W_{t}\chi_{E}\left(  x\right)  
-\chi_{E}\left(x\right)  \right)  ^{+}dx
\\   \nonumber 
& =\int_{\mathbb{R}^{n}}\left(  W_{t}\chi_{E}\left(  x\right)-\chi_{E}\left(x\right)\right)^{-}dx  
=\frac{1}{2}\int_{\mathbb{R}^{n}}\left\vert W_{t}\chi_{E}\left(  x\right)  
-\chi_{E}\left(  x\right)  \right\vert dx
\end{align}
(notice that the above equality holds under the hypothesis that either $E$ or $E^c$ has 
finite measure). We can also write
\[
\int_{E^{c}}W_{t}\chi_{E}\left(  x\right)  dx=\int_{E^{c}}\left(  W_{t}%
\chi_{E}\left(  x\right)  -\chi_{E}\left(  x\right)  \right)  dx=\int_{0}%
^{t}ds\int_{E^{c}}LW_{t}\chi_{E}\left(  x\right)  dx.
\]
\textrm{ }But then by the divergence theorem%
\begin{align}
\int_{E^{c}}LW_{t}\chi_{E}\left(  x\right)  dx  &  
=\int_{\partial_{\mathbb{G}}^{\ast}E}dP_{\mathbb{G}}\left(  E\right)  \left(  x\right)
\int_{\mathbb{R}^{n}}\langle\nabla_{X}h(s,y^{-1}\circ x),\nu_{E}(x)\rangle
\chi_{E}\left(  y\right)  dy
\nonumber\\     \label{36}
&  \leq\int_{\partial_{\mathbb{G}}^{\ast}E}dP_{\mathbb{G}}\left(  E\right)
\left(  x\right)  \int_{\mathbb{R}^{n}}\left\vert \nabla_{X}
h(s,y^{-1}\circ x)\right\vert dy
\end{align}
By (\ref{42}) and (\ref{36}) the conclusion then follows since 
\begin{align*}
\frac{1}{4\sqrt{t}}\int_{{\mathbb{R}}^{n}}|W_{t}\chi_{E}- \chi_{E}|dx
&=\frac{1}{2\sqrt{t}}\int_{E^{c}}W_{t}\chi_{E}(x)dx=\frac{1}{2\sqrt{t}}
\int_{0}^{t}ds\int_{E^{c}}LW_{s}\chi_{E}(x)dx
\\
&  \leq\frac{1}{2\sqrt{t}}\int_{0}^{t}\left(  \int_{\partial_{{\mathbb{G}}}^{\ast}E}
dP_{\mathbb{G}}\left(  E\right)  (x)\int_{{\mathbb{R}}^{n}} 
|\nabla_{X}h(s,y^{-1}\circ x)|dy\right)  ds
\\
&  =\frac{1}{2\sqrt{t}}\int_{0}^{t}\left(  \int_{\partial_{{\mathbb{G}}}^{\ast}E}
dP_{\mathbb{G}}\left(  E\right)  (x)
\frac{1}{\sqrt{s}}\int_{{\mathbb{R}}^{n}}|\nabla_{X}h(1,z)|dz\right)  ds
\\
&  \leq c_{\mathbb{G}}P_{{\mathbb{G}}}(E).
\end{align*}
\end{proof}

\begin{proof}  [Proof of Theorem~\ref{Main Theorem 2}]The derivation of Theorem~\ref{Main Theorem 2} 
from Theorem~\ref{diffusionperimeters} is based on the
coarea formula, and is similar to that of Theorem 4.1 of
\cite{MirPalParPre07Sho}. Assume $f\in BV({\mathbb{G}})$ and set 
$E_{\tau}=\{f>\tau\}$. By Proposition~\ref{coarea}, for a.e. $\tau\in\mathbb{R}$ the
set $E_{\tau}$ has finite perimeter, hence by Theorem~\ref{diffusionperimeters} we can write 
\[
\lim_{t\rightarrow0}\frac{1}{2\theta_{\infty}\sqrt{t}} 
\int_{E_{\tau}^{c}}W_{t}\chi_{E_{\tau}}(x)dx =
\int_{\partial_{{\mathbb{G}}}^{\ast}E_{\tau}}
\phi_{{\mathbb{G}}}(\nu_{E_{\tau}}(x)) d{\mathcal{S}}_{\infty}^{Q-1}(x)
\]
for a.e. $\tau$.

Moreover, using the same arguments as \cite[Proposition 3.69]{AmbFusPal00Fun}
and the continuity of $\phi_{\mathbb{G}}$ it is easily checked that the
function $\phi_{\mathbb{G}}(\sigma_{f}(x))$ is (obviously bounded and) Borel.
Comparing $f$ with $f\vee\tau\chi_{E_{\tau}}$ and using \cite[Proposition 3.73]{AmbFusPal00Fun} 
we see that for a.e. $\tau\in{\mathbb{R}}$ the equality
$\sigma_{f}(x)=\nu_{E_{\tau}}(x)$ holds for ${\mathcal{S}}_{\infty}^{Q-1}$-a.e. 
$x\in\partial_{\mathbb{G}}^{\ast}E_{\tau}$, whence
\[
\phi_{\mathbb{G}}(\sigma_{f}(x))=\phi_{\mathbb{G}}(\nu_{E_{\tau}}(x))
\text{ for a.e.}\tau\in{\mathbb{R}},\text{ }
{\mathcal{S}}_{\infty}^{Q-1}\text{-a.e.}x\in\partial_{\mathbb{G}}^{\ast}E_{\tau}.
\]
We point out that if $x$ belongs to the boundary of two different level sets
$E_{\tau},E_{\sigma}$ (which happens whenever $f$ has a jump at $x$), then the
above equality holds for both the levels because $\nu_{E_{\tau}}(x)=\nu_{E_{\sigma}}(x)$ 
for $\sigma,\tau\in\lbrack f^{-}(x),f^{+}(x)]$. We
also notice that, for almost every $x,y\in\mathbb{R}^{n}$,
\begin{equation}   \label{miservesubito}
\int_{\mathbb{R}}|\chi_{E_{\tau}}(x)-\chi_{E_{\tau}}(y)|d\tau
=|f(x)-f(y)|.
\end{equation}
With the aid of coarea formula \eqref{coareag} with 
$g(x)=\phi_{\mathbb{G}}(\sigma_{f}(x))$, by (\ref{42}), (\ref{miservesubito}) and using the
dominated convergence theorem, we obtain that 
\begin{align}
\int_{{\mathbb{R}}^{n}}\phi_{\mathbb{G}}(\sigma_{f}(x))d\left\vert D_{\mathbb{G}}f\right\vert 
& = \theta_\infty\int_{\mathbb{R}}\int_{\partial_{\mathbb{G}}^{\ast}E_{\tau}}
\phi_{\mathbb{G}}(\nu_{E_{\tau}}(x))d{\mathcal S}_\infty^{Q-1}(x)d\tau
\nonumber\\
& =\int_{\mathbb{R}}\lim_{t\rightarrow0}\frac{1}{2\sqrt{t}}\int_{E_{\tau}^{c}}W_{t}
\chi_{E_{\tau}}(x)dxd\tau
\nonumber\\
& =\lim_{t\rightarrow0}\int_{\mathbb{R}}\frac{1}{4\sqrt{t}}
\int_{\mathbb{R}^{n}}|W_{t}\chi_{E_{\tau}}-\chi_{E_{\tau}}|dxd\tau
\nonumber\\
& =\lim_{t\rightarrow0}\frac{1}{4\sqrt{t}}\int_{\mathbb{R}}
\int_{\mathbb{R}^{n}\times\mathbb{R}^{n}}|\chi_{E\tau}(y)-\chi_{E_{\tau}}(x)
|h(t,y^{-1}\circ x)dxdyd\tau
\nonumber\\   \label{ultimo}
& =\lim_{t\rightarrow0}\frac{1}{4\sqrt{t}}
\int_{\mathbb{R}^{n}\times\mathbb{R}^{n}}|f(x)-f(y)|h(t,y^{-1}\circ x)dxdy.
\end{align}
Conversely, assume that $f\in L^{1}({\mathbb{R}}^{n})$ and that
\[
\liminf_{t\rightarrow0}\frac{1}{\sqrt{t}}
\int_{\mathbb{R}^{n}\times\mathbb{R}^{n}}|f(x)-f(y)|h(t,y^{-1}\circ x)dxdy
\]
is finite. Then,
\[%\begin{align*} &  
\int_{{\mathbb{R}}^{n}\times{\mathbb{R}}^{n}}|f(x)-f(y)|\,h(t,y^{-1}\circ x)dxdy
%& \\ &  
=\int_{\mathbb{R}}\int_{{\mathbb{R}}^{n}\times{\mathbb{R}}^{n}}
|\chi_{E_{\tau}}(x)-\chi_{E_{\tau}}(y)|\,h(t,y^{-1}\circ x)dxdyd\tau\,.
\]%\end{align*}
Since $\int_{E^{c}}W_{t}\chi_{E}\geq0$, by the above equality and (\ref{42}), which we are
allowed to exploit because for $f\in L^1({\mathbb R}^n)$ either $E_\tau$ or its 
complement has finite measure, we get
\begin{align*}
0 &  \leq\int_{\mathbb{R}}\left(  \liminf_{t\rightarrow0}\frac{1}{\sqrt{t}}
\int_{E_{\tau}^{c}}W_{t}\chi_{E_{\tau}}dx\right)  d\tau
\leq\liminf_{t\rightarrow0}\int_{\mathbb{R}}\frac{1}{\sqrt{t}}
\int_{E_{\tau}^{c}}W_{t}\chi_{E_{\tau}}dx\,d\tau
\\
&  \leq\liminf_{t\rightarrow0}\frac{1}{2\sqrt{t}}
\int_{{\mathbb{R}}^{n}\times{\mathbb{R}}^{n}}
\int_{\mathbb{R}}|\chi_{E_{\tau}}(x)-\chi_{E_{\tau}}(y)|\,h(t,y^{-1}\circ x)dx\,dy\,d\tau
<+\infty\,.
\end{align*}
In particular, by Theorem~\ref{diffusionperimeters}, for a.e. $\tau\in{\mathbb{R}}$ 
the set ${E_{\tau}}$ has finite perimeter and the limit
${\displaystyle\lim_{t\rightarrow0}}\frac{1}{\sqrt{t}}
\int_{E_{\tau}^{c}}W_{t}\chi_{E_{\tau}}dx$ exists. As a consequence, 
$f\in BV_{\mathrm{loc}}(\mathbb{G})$ and we may use the above identity (\ref{ultimo}) to get 
\begin{align*}
|D_{\mathbb{G}}f|({\mathbb{R}}^{n}) &  \leq
\frac{1}{\min\phi_{\mathbb{G}}}\int_{\mathbb{R}^{n}}\phi_{\mathbb{G}}(\sigma_{f})
d\left\vert D_{\mathbb{G}}f\right\vert 
\\
&  \leq\frac{1}{\min\phi_{\mathbb{G}}}\liminf_{t\rightarrow0}\frac{1}{4\sqrt{t}}
\int_{{\mathbb{R}}^{n}\times{\mathbb{R}}^{n}}|f(x)-f(y)|\,h(t,y^{-1}\circ x)dx\,dy
<+\infty
\end{align*}
that is, $f\in BV({\mathbb{G}})$.
\end{proof}

\end{document}